\documentclass[11pt, oneside]{amsart}
\usepackage{amsfonts, amstext, amsmath, amsthm, amscd, amssymb}
\usepackage{color,graphics,graphicx,enumerate, comment,setspace,pinlabel,hyperref,caption}
\usepackage[all]{xy}
\usepackage[paper=a4paper, text={138mm,208mm},centering]{geometry}

\xyoption{import,dvips,rotate}

%%% Start of metadata
%

%  First author
%

%%% End of metadata
%
%%% Start of user-defined macros %%%

\DeclareMathOperator{\im}{im}

\DeclareMathOperator{\coker}{coker}

\DeclareMathOperator{\Hom}{Hom}

\DeclareMathOperator{\Ext}{Ext}
\DeclareMathOperator{\Tor}{Tor}

\DeclareMathOperator{\ann}{ann}

\DeclareMathOperator{\closure}{cl}

\newcommand{\eps}{\varepsilon}

\def\A{\mathbb{A}}

\def\C{\mathbb{C}}

\def\FF{\mathbb{F}}
\def\H{\mathbb{H}}
\def\R{\mathbb{R}}
\def\Z{\mathbb{Z}}

\def\Q{\mathbb{Q}}

\def\NN{{\mathfrak{N}}}

\def\p{{\mathfrak{p}}}

\def\lmat{\left(\begin{smallmatrix}}
\def\rmat{\end{smallmatrix}\right)}

\def\sm{\setminus}

\def\co{\colon\thinspace}

\theoremstyle{plain}
\newtheorem{theorem}{Theorem}[section]
\newtheorem{proposition}[theorem]{Proposition}
\newtheorem{lemma}[theorem]{Lemma}
\newtheorem{corollary}[theorem]{Corollary}

\newtheorem{question}[theorem]{Question}

\theoremstyle{definition}
\newtheorem{definition}[theorem]{Definition}
\newtheorem{example}[theorem]{Example}

\theoremstyle{remark}

\newtheorem*{remark}{Remark}

%%% End of user-defined macros %%%

\begin{document}

\title[Double Witt groups]{Double Witt groups}
\author{Patrick Orson} 
\address{Department of Mathematics\\University of Durham\\United Kingdom}
\email{patrick.orson@durham.ac.uk}

\def\subjclassname{\textup{2010} Mathematics Subject Classification}
\expandafter\let\csname subjclassname@1991\endcsname=\subjclassname
\expandafter\let\csname subjclassname@2000\endcsname=\subjclassname
\subjclass{%
 57Q45,  %Knots and links in high dim
 %57M25, % Knots and links in $S^3$
 %57M27, % Invariants of knots and 3-manifolds
 %57N13, % Topology of 4-manifolds
 %57N70, % Cobordism and concordance (in low dimension)
  57Q60; % Cobordism and concordance (in high dimension)
%  57M07, % Topological methods in group theory
57R65, 57R67; % Surgery and Wall obstructions.
}

\keywords{Doubly-slice knots, Blanchfield pairing, Seifert form}

\begin{abstract}
The difference between slice and doubly-slice knots is reflected in algebra by the difference between metabolic and hyperbolic Blanchfield linking forms. We exploit this algebraic distinction to refine the classical Witt group of linking forms by defining a `double Witt group' of linking forms. We calculate the double Witt group for Dedekind domains and precisely determine its relationship to the classical Witt group. Finally, we prove that the double Witt group of Seifert forms is isomorphic to the double Witt group of Blanchfield forms.
\end{abstract}

\maketitle

\section{Introduction}

An $n$-knot $K:S^n\hookrightarrow S^{n+2}$ is \emph{slice} if it is the intersection of an $(n+1)$-knot $J$ and the equator sphere $S^{n+2}\subset S^{n+3}$. $K$ is called \emph{doubly-slice} if $J$ is moreover the unknot. For any odd dimensional knot $K$, the infinite cyclic cover of the knot complement has a middle-dimensional linking pairing called the \emph{Blanchfield form} of $K$. It is a non-singular linking form on a $P$-torsion $\Z[\Z]$-module, where here $P$ is the set of $p\in\Z[\Z]$ which augment to $\pm 1$. When $K$ is slice, the Blanchfield form admits a maximally self-annihilating submodule called a \emph{lagrangian} and when $K$ is doubly-slice the Blanchfield form decomposes as the direct sum of two lagrangians. In the first case, such a pairing is called \emph{metabolic} and in the second case \emph{hyperbolic} (see Levine \cite{MR1004605}). For $n>1$, the problem of detecting if a knot is slice has been solved: when $n$ is even,  Kervaire \cite{MR0189052} showed that all knots are slice, and when $n=2k+1$ is odd Kervaire \cite{MR0189052} and Levine \cite{MR0179803,MR0246314} showed that a knot is slice if and only if its Blanchfield form is metabolic. Thus the final stage in the solution to the slice problem was calculating the group $W^\eps(\Z[\Z],P)$, $\eps=(-1)^k$, of non-singular Blanchfield forms modulo metabolic Blanchfield forms. Levine \cite{MR0246314} and Stoltzfus \cite{MR0467764} calculated this to be \[W^\eps(\Z[\Z],P)\cong \bigoplus_\infty\Z\oplus \bigoplus_\infty(\Z/2\Z)\oplus \bigoplus_\infty(\Z/4\Z).\]

In contrast, and even when $n>1$, the doubly-slice problem is still quite poorly understood. Stoltzfus \cite{MR521738} has suggested the approach of working with non-singular Seifert forms modulo hyperbolic Seifert forms. In this paper we follow a similar line of thought by introducing more general tools for investigating the difference between metabolic and hyperbolic linking forms. These tools are called \emph{double Witt groups}.

For a commutative Noetherian ring $A$ and a multiplicative subset $S$ defining a ring localisation, we make the first general definition of the group of linking forms modulo hyperbolic linking forms. We call this  the \emph{double Witt group} of linking forms $DW^\eps(A,S)$. In order to prove that this is a group, we need to make the further assumption that there is an element $s\in A$ such that $s+\overline{s}=1$. We call such an element a \emph{half-unit}.

Building on work of Levine \cite{MR564114}, we prove in Section \ref{sec:DWgroups} a primary decomposition theorem which can be used to calculate some double Witt groups and also measures the difference between the Witt and double Witt groups.

\medskip

\noindent{\bf Theorem (\ref{thm:multisignature} and \ref{cor:forget})}\,\,
{\sl If $A$ is a Dedekind domain containing a half-unit then there is an isomorphism of abelian groups} \[DW(A,A\sm\{0\})\cong \bigoplus_{\p=\overline{\p}}\bigoplus_{l=1}^\infty W^{v_p}(A/\p),\quad v_p\co =\left\{\begin{array}{lll}\eps&&\text{$l$ even,}\\u_p\eps&&\text{$l$ odd,}\end{array}\right.\]{\sl where $\p$ denotes an involution invariant prime ideal with uniformiser $p=u_p\overline p$. Denoting the $(\p,l)$th component of the above decomposition by $\sigma_{\p,l}$, the (surjective) forgetful functor from the double Witt group to the Witt group is given by} \[DW(A,A\sm\{0\})\to W(A,A\sm\{0\});\qquad(T,\lambda)\mapsto \bigoplus_{\p=\overline{\p}}\left(\sum_{\text{$l$ odd}}\sigma_{\p,l}(T,\lambda)\right).\]

\medskip

In other words, for each primary component of the Witt group, the double Witt group has an infinite family of components. Half of these components (the $l$ even half) are invisible to the Witt group and the other half (the $l$ odd half) are combined into a sum and so cannot be distinguished from one-another in the Witt group. However, all components are visible to the double Witt group.

In Section \ref{sec:knots}, we then turn to the particular case of Blanchfield linking forms. Of course, in this case $A=\Z[\Z]$, so there is no half-unit in the ring. We show how to sidestep this issue and define a double Witt group in this setting as well. There is a well-known connection between Blanchfield forms and the more commonly used knot invariant called a \emph{Seifert form} for $K$. In particular the Witt groups of Seifert forms and of Blanchfield forms are isomorphic to one-another. We will prove the following using an operation called \emph{algebraic covering}:

\medskip

\noindent{\bf Theorem (\ref{thm:covering})}\,\,
{\sl If $R$ is any commutative Noetherian ring with unit, there is an isomorphism from the double Witt group of Seifert forms over $R$ to the double Witt group of Blanchfield forms over $(R[\Z],P)$}\[\widehat{DW_\eps}(R)\xrightarrow{\cong}DW^{-\eps}(R[\Z],P).\]

\medskip

In particular, combining Theorem \ref{thm:covering} with our primary decompositions and calculations in Section \ref{sec:DWgroups}, we may calculate the double Witt group of Seifert forms for any field coefficients, recovering a result of Stoltzfus \cite{MR521738}.

\section{Algebraic conventions and localisation}

We first lay down some general algebraic conventions with which we work. Notably, we will restrict ourselves to working over a \emph{commutative} ring $A$. As our main topological application will involve the group ring $\Z[\Z]$, this will be sufficient for our purposes.

In the following, $A$ will always be a commutative Noetherian ring with a unit, half-unit and involution. The involution is denoted\[\overline{\phantom{A}}\co A\to A;\qquad a\mapsto \overline{a}.\]Using the involution we define a way of switching between left and right modules, which permits an efficient way of describing sesquilinear pairings between left $A$--modules. A left $A$--module $P$ may be regarded as a right $A$--module $P^t$ by the action \[P^t\times A\to P^t;\qquad (x,a)\mapsto \overline{a}x.\]Similarly, a right $A$--module $P$ may be regarded as a left $A$--module $P^t$. Unless otherwise specified, the term `$A$--module' will refer to a left $A$--module. Given two $A$--modules $P,Q$, the tensor product is an abelian group denoted $P^t\otimes_A Q$. We will sometimes write simply $P\otimes Q$ to ease notation, but the right $A$--module structure $P^t$ is implicit, so that for example $x\otimes ay=\overline{a}x\otimes y$.

In the following, $S\subset A$ will always be a \emph{multiplicative} subset, that is a set with the following properties:

\begin{enumerate}[(i)]
\item $st\in S$ for all $s,t\in S$,
\item $sa=0\in A$ for some $s\in S$ and $a\in A$ only if $a=0\in A$,
\item $\overline{s}\in S$ for all $s\in S$,
\item $1\in S$.
%\item For $a\in A, s\in S$ there exists $b,b'\in A$, $t, t'\in S$ such that $at=sb$ and $t'a=b's$.
\end{enumerate}The \emph{localisation of $A$ away from $S$} is $S^{-1}A$, the $A$--module of equivalence classes of pairs $(a,s)\in A\times S$ under the relation $(a,s)\sim (b,t)$ if and only if there exists $c\neq0\in A$ such that $c(at-bs)=0\in A$. We say the pair $(A,S)$ \emph{defines a localisation} and denote the equivalence class of $(a,s)$ by $a/s\in S^{-1}A$.

%In general non-commutative localisation is a delicate matter. (Our use of the `two-sided Ore condition' ((v) above) ensures an isomorphism between the left and right localisations $S^{-1}A$ and $AS^{-1}$.)

If $P$ is an $A$--module denote $S^{-1}P\co=S^{-1}A\otimes_AP$ and write the equivalence class of $(a/s)\otimes x$ as $ax/s$. Similarly, if $f\co P\to Q$ is a morphism of $A$--modules then there is induced a morphism of $S^{-1}A$--modules $S^{-1}f=1\otimes f\co S^{-1}P\to S^{-1}Q$. If $S^{-1}P\cong0$ then the $A$--module $P$ is \emph{$S$--torsion}. Using condition (ii) in the assumptions for $S$, the natural morphism $i\co A\to S^{-1}A$ sending $a\mapsto a/1$ induces a short exact sequence of $A$--modules \[0\to A\to S^{-1}A\to S^{-1}A/A\to 0.\]$i\co A\to S^{-1}A$ is moreover an injective ring morphism, but in general $S^{-1}A/A$ is not a ring. For an $A$--module $P$, the induced morphism $i\co P\to S^{-1}P$ is not generally injective. This is measured by the exact sequence\[0\to \Tor^A_1(S^{-1}A,P)\to \Tor_1^A(S^{-1}A/A,P)\to P\to S^{-1} P\to (S^{-1}A/A)\otimes_A P.\]From this, we see that $i\co P\to S^{-1}P$ is injective if and only if $\Tor^A_1(S^{-1}A/A,P)$ vanishes. This happens, for instance, when $P$ is a projective module. Define the \emph{$S$--torsion of $P$} to be\[TP:=\ker(P\to S^{-1}P).\]

\subsection*{Torsion modules and duality}

Define a category\[\A(A)=\{\text{finitely generated (\text{f.g.}), projective $A$--modules}\},\] with $A$--module morphisms. An $A$--module $Q$ has \emph{homological dimension $m$} if it admits a resolution of length $m$ by f.g.\ projective $A$--modules, i.e.\ there is an exact sequence\[0\to P_m\to P_{m-1}\to\dots\to P_0\to Q\to 0,\]with $P_i$ in $\A(A)$. If this condition is satisfied by all $A$--modules $Q$ we say $A$ is of homological dimension $m$. If $(A,S)$ defines a localisation, define a category \[\H(A,S)=\{ \text{f.g.\ $S$--torsion $A$--modules of homological dimension 1}\}\]with $A$--module morphisms. $\A(A)$ has a good notion of duality, coming from the $\Hom$ functor, and $\H(A,S)$ has a corresponding good notion of `torsion duality' as we now explain.

Given $A$--modules $P$, $Q$, we denote the additive abelian group of $A$--module homomorphisms $f\co P\to Q$ by $\Hom_A(P,Q)$. The \emph{dual} of an $A$--module $P$ is the $A$--module \[P^*:=\Hom_A(P,A)\]where the action of $A$ is $(a,f)\mapsto (x\mapsto f(x)\overline{a})$. If $P$ is in $\A(A)$, then there is a natural isomorphism\[\sm-\co P^t\otimes Q\xrightarrow{\cong}\Hom_A(P^*,Q);\qquad x\otimes y\mapsto (f\mapsto \overline{f(x)}y).\]In particular, using the natural $A$--module isomorphism $P\cong P^t\otimes A$, there is a natural isomorphism \[P\xrightarrow{\cong} P^{**};\qquad x\mapsto (f\mapsto \overline{f(x)}).\]Using this, for any $A$--module $Q$ in $\A(A)$ and $f\in\Hom_A(Q,P^*)$ there is a \emph{dual morphism}\[f^*\co P\to Q^*;\qquad x\mapsto (y\mapsto \overline{f(y)(x)}).\]

\begin{lemma}\label{lem:ext1}If $T$ is a f.g.\ $A$--module with homological dimension 1 and $T^*=0$ then there is a natural isomorphism of $A$--modules $T\cong \Ext^1_A(\Ext^1_A(T,A),A)$.
\end{lemma}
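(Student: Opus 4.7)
The plan is to construct the isomorphism by double-dualising a projective resolution of $T$. Using the hypothesis that $T$ has homological dimension $1$, pick a resolution
\[0\to P_1 \xrightarrow{d} P_0 \to T \to 0\]
with $P_0, P_1 \in \A(A)$. Applying $\Hom_A(-,A)$ and using that $\Ext^i_A(P_j, A) = 0$ for $i \geq 1$ (since the $P_j$ are projective), the long exact sequence of $\Ext$ collapses to
\[0\to T^*\to P_0^* \xrightarrow{d^*} P_1^* \to \Ext^1_A(T,A)\to 0.\]
The hypothesis $T^* = 0$ then says that $d^*$ is injective, so we have obtained a length-$1$ projective resolution of $\Ext^1_A(T,A)$, using objects in $\A(A)$; in particular this already proves that $\Ext^1_A(T,A)$ lies in $\H(A,S)$ (once one notes it is $S$-torsion, which follows since Ext commutes with localisation away from the projective resolution of $T$).

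Next I would iterate: apply $\Hom_A(-,A)$ to this new resolution. The natural double-duality isomorphism $P \xrightarrow{\cong} P^{**}$ of the excerpt identifies the resulting complex with
\[0\to \Ext^1_A(T,A)^* \to P_1 \xrightarrow{d} P_0 \to \Ext^1_A(\Ext^1_A(T,A),A)\to 0,\]
where the middle arrow is the original differential $d$ (up to the canonical double-dual identification, together with the sign introduced by dualising $d^*$). Since $d$ is injective and has cokernel $T$, reading off the outer terms gives simultaneously the identity $\Ext^1_A(T,A)^* = 0$ and the desired isomorphism
\[\Ext^1_A(\Ext^1_A(T,A),A) \cong T.\]

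For naturality, I would observe that a morphism $f\co T \to T'$ between f.g.\ modules of homological dimension $1$ lifts (uniquely up to chain homotopy) to a morphism of chosen projective resolutions; applying $\Hom_A(-,A)$ twice yields a chain map between the reconstructed resolutions, and the induced map on the cokernel is $f$ itself under the identification above. Independence of the choice of resolution follows in the standard way from the fact that two chain lifts are chain homotopic and that $\Hom_A(-,A)$ and double dualisation preserve chain homotopies. The only mild subtlety, which I expect to be the main point to check carefully, is that the canonical double-dual isomorphism on $\A(A)$ interacts correctly with the connecting morphism for $\Ext^1$; this is a naturality-of-Yoneda-Ext verification that amounts to chasing the diagram that identifies the iterated dual resolution with the original one.
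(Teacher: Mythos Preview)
Your proof is correct and follows essentially the same approach as the paper: dualise a length-one projective resolution of $T$ to obtain a projective resolution of $\Ext^1_A(T,A)$ (using $T^*=0$), then dualise again and invoke the natural double-dual isomorphism $P_i\cong P_i^{**}$ to recover the original resolution and hence $T$. Your treatment of naturality is spelled out in more detail than the paper's, but the argument is the same.
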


\begin{proof}Taking the dual of a projective resolution $P_1\xrightarrow{d} P_0\to T$ we obtain a projective resolution for $\Ext^1_A(T,A)$\[T^*=0\to P_0^*\xrightarrow{d^*} P_1^*\to \Ext^1_A(T,A)\to 0.\]Taking the dual again we obtain an exact sequence\[0\to (\Ext^1_A(T,A))^*\to P_1^{**}\xrightarrow{d^{**}} P_0^{**}\to \Ext^1_A(\Ext^1_A(T,A),A).\]The natural isomorphisms $P_i\cong P_i^{**}$ for $i=1, 2$ in particular imply that $(\Ext^1_A(T,A))^*=\ker{d^{**}}\cong\ker{d}=0$. A natural isomorphism of projective resolutions induces a natural isomorphism $T\cong \Ext^1_A(\Ext^1_A(T,A),A)$ (independent of the resolution chosen).
\end{proof}

\begin{lemma}\label{lem:ext2}If $(A,S)$ defines a localisation and $T$ is a f.g.\ $A$--module that is $S$--torsion, then $T^*=0$ and there is a natural isomorphism\[\Ext^1_A(T,A) \cong\Hom_A(T,S^{-1}A/A).\]
\end{lemma}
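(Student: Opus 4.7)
The plan is to apply $\Hom_A(T,-)$ to the short exact sequence
\[0\to A\to S^{-1}A\to S^{-1}A/A\to 0\]
and extract the claimed isomorphism from the resulting long $\Ext$ sequence, after disposing of the vanishing $T^*=0$ by a direct argument.

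For $T^*=0$: given $f\co T\to A$ and $x\in T$, since $T$ is $S$--torsion there exists $s\in S$ with $sx=0$, hence $s\cdot f(x)=f(sx)=0\in A$. Condition (ii) of the multiplicative set then forces $f(x)=0$, so $f=0$.

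For the isomorphism, the long exact sequence produced by $\Hom_A(T,-)$ takes the form
\[0\to\Hom_A(T,A)\to\Hom_A(T,S^{-1}A)\to\Hom_A(T,S^{-1}A/A)\to\Ext^1_A(T,A)\to\Ext^1_A(T,S^{-1}A)\to\cdots\]
and the goal is to show that both $\Hom_A(T,S^{-1}A)$ and $\Ext^1_A(T,S^{-1}A)$ vanish, so that the connecting homomorphism becomes the required natural isomorphism. The crux is the following observation: since $T$ is finitely generated, say by $x_1,\dots,x_n$, and each $x_i$ is annihilated by some $s_i\in S$, the product $s\co=s_1\cdots s_n\in S$ annihilates all of $T$. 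Multiplication by $s$ is then the zero endomorphism of $T$, hence induces the zero endomorphism on every $\Ext^i_A(T,M)$. However, $\Ext^i_A(T,S^{-1}A)$ inherits a natural $S^{-1}A$--module structure via the second variable, in which $s$ acts invertibly. Being simultaneously zero and a unit forces $\Ext^i_A(T,S^{-1}A)=0$ for every $i\geq 0$.

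There is no serious obstacle here; the substance of the proof lies in the trick of combining the finite generation of $T$ with closure of $S$ under multiplication to produce a single annihilator $s\in S$ for all of $T$, and then playing off its zero action (from the first variable) against its invertibility (from the second variable). Naturality of the resulting isomorphism in $T$ is automatic from the naturality of the connecting homomorphism in the long exact $\Ext$ sequence.
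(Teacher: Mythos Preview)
Your proof is correct and follows essentially the same route as the paper: apply $\Hom_A(T,-)$ to the localisation short exact sequence and kill the two flanking terms $\Hom_A(T,S^{-1}A)$ and $\Ext^1_A(T,S^{-1}A)$. The only difference is cosmetic: the paper kills $\Ext^1_A(T,S^{-1}A)$ by invoking exactness of localisation to identify it with $S^{-1}\Ext^1_A(T,A)$ (which vanishes since $T$ is $S$--torsion), whereas you argue directly that a single $s\in S$ acts both as zero and as a unit on $\Ext^i_A(T,S^{-1}A)$---same idea, packaged slightly differently.
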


\begin{proof}As $T$ is f.g.\ there exists $s\in S$ such that $sT=0$, so clearly $T^*=0$. Similarly $\Hom_A(T,S^{-1}A)=0$. Using the short exact sequence of $A$--modules \[0\to A\to S^{-1}A\to S^{-1}A/A\to 0,\] we obtain the long exact sequence for $\Ext$\[0\to \Hom_A(T,S^{-1}A/A)\to \Ext_A^1(T,A)\to \Ext_A^1(T,S^{-1}A)\]and as localisation is exact we have $\Ext^1_A(T,S^{-1}A)\cong S^{-1}\Ext^1_A(T,A)=0$ as $T$ is $S$--torsion.
\end{proof}

Lemmas \ref{lem:ext1} and \ref{lem:ext2} show that, proceeding as in the category $\A(A)$ we may define a duality and the same results will follow. Precisely, the \emph{torsion dual} of a module $T$ in $\H(A,S)$ is the module\[T^\wedge:=\Hom_A(T,S^{-1}A/A)\]in $\H(A,S)$ with the action of $A$ given by $(a,f)\mapsto (x\mapsto f(x)\overline{a})$. There is a natural isomorphism\[T\xrightarrow{\cong} T^{\wedge\wedge};\qquad x\mapsto (f\mapsto \overline{f(x)}),\]and for $R,T$ in $\H(A,S)$, $f\in\Hom_A(R,T^\wedge)$ there is a \emph{torsion dual morphism}\[f^\wedge\co T\to R^\wedge;\qquad x\mapsto(y\mapsto\overline{f(y)(x)}).\]

\section{Double Witt groups}\label{sec:DWgroups}

Linking forms and the Witt groups of linking forms for various rings are important tools in surgery obstruction theory (see Wall \cite{MR0156890}, Ranicki \cite{MR620795}) and in the theory of knot concordance (see Levine \cite{MR0246314}, Kearton \cite{MR0385873}). Indeed in certain situations, the Witt group of linking forms is thought of as the \emph{algebraic} knot concordance group. Stoltzfus \cite{MR521738} applied this concept to the idea of \emph{double knot concordance} (see Sumners \cite{MR0290351}) to define an \emph{algebraic} double knot concordance group $CH^\eps(\Z)$ (which we call later $\widehat{DW}_\eps(\Z)$). In contrast to the algebraic knot concordance group, Stoltzfus' refined group is still not calculated.

In this section we expand on Stoltzfus' idea to make the first general definition of what we call the \emph{double Witt group of linking forms} for a localisation $(A,S)$. In the case that $A$ is a Dedekind domain we will calculate this group in terms of previously known invariants. In particular, we will show that each signature in the Witt group of linking forms over $(A,S)$ corresponds to a countably infinite family of signatures in the double Witt group. We determine the precise manner in which the double Witt group signatures collapse to form the component of the single Witt group.

\subsection{Forms and linking forms}

A standard references for the algebra of forms and classical Witt groups is Milnor-Husemoller \cite{MR0506372}. The language we use for the general setting of rings with involution is based on that found in Ranicki \cite[1.6, 3.4]{MR620795} although we caution that our terminology, particularly later on regarding lagrangian submodules, differs slightly. Also, our use of the word `split' in reference to forms and linking forms is entirely different to Ranicki's use.

\begin{definition}An \emph{$\eps$--symmetric form over $A$} is a pair $(P,\theta)$ consisting of an object $P$ of $\A(A)$ and an injective $A$--module morphism $\theta\co P\hookrightarrow P^*$ such that $\theta(x)(y)=\eps\overline{\theta(y)(x)}$ for all $x,y\in P$ (equivalently $\theta=\eps\theta^*$). A form $(P,\theta)$ is \emph{non-singular} if $\theta$ is an isomorphism. A form induces a sesquilinear pairing also called $\theta$\[\theta\co P\times P\to A;\qquad (x,y)\mapsto \theta(x,y):=\theta(x)(y).\]A morphism of $\eps$--symmetric forms $(P,\theta)\to (P',\theta')$ is an $A$--module morphism $f\co P\to P'$ such that $\theta(x)(y)=\theta'(f(x))(f(y))$ (equivalently $\theta=f^*\theta' f$), it is an isomorphism when $f$ is an $A$--module isomorphism. The set of isomorphism classes of $\eps$--symmetric forms over $A$, equipped with the addition $(P,\theta)+(P',\theta')=(P\oplus P',\theta\oplus \theta')$ forms a commutative monoid \[\NN^\eps(A)=\{\text{$\eps$--symmetric forms over $A$}\}.\]
\end{definition}

\begin{definition}\label{def:linking}Suppose $(A,S)$ defines a localisation. An \emph{$\eps$--symmetric linking form over $(A,S)$} is a pair $(T,\lambda)$ consisting of an object $T$ of $\H(A,S)$ and an injective $A$--module morphism $\lambda\co T\hookrightarrow T^\wedge$ such that $\lambda(x)(y)=\eps\overline{\lambda(y)(x)}$ for all $x,y\in T$ (equivalently $\lambda=\eps\lambda^\wedge$). A linking form $(T,\lambda)$ is \emph{non-singular} if $\lambda$ is an isomorphism. A linking form induces a sesquilinear pairing also called $\lambda$\[\lambda\co T\times T\to S^{-1}A/A;\qquad (x,y)\mapsto \lambda(x,y):=\lambda(x)(y).\]A morphism of $\eps$--symmetric linking forms $(T,\lambda)\to (T',\lambda')$ is an $A$--module morphism $f\co T\to T'$ such that $\lambda(x)(y)=\lambda'(f(x))(f(y))$ (equivalently $\lambda=f^\wedge\lambda' f$). $f$ is an isomorphism of forms when $f$ is an $A$--module isomorphism. The set of isomorphism classes of $\eps$--symmetric linking forms over $A$, equipped with the addition $(T,\lambda)+(T',\lambda')=(T\oplus T',\lambda\oplus \lambda')$ forms a commutative monoid \[\NN^\eps(A,S)=\{\text{$\eps$--symmetric linking forms over $(A,S)$}\}.\]
\end{definition}

\begin{definition}[Terminology]When $\eps=1$ we will omit $\eps$ from the terminology e.g.\ \emph{symmetric form} indicates $(+1)$-symmetric form.
\end{definition}

\subsection*{Linking form decomposition in a Dedekind domain}Recall that a \emph{Dedekind domain} is an integral domain for which every module has homological dimension 1. An equivalent definition is that a Dedekind domain is an integral domain in which every non-zero proper ideal factors uniquely as a product of prime ideals. Over a Dedekind domain $A$, a torsion $A$--module $T$ has annihilator $\ann(T ) = \p_1^{l_1} \p_2^{l_2} ...\p_m^{l_m}$ for some prime ideals $\p_i\subset A$, so that there is a natural isomorphism of $A$--modules\[T\cong\bigoplus_{j=1}^m T_{\p_j};\qquad T_{\p_j}:=A_{\p_j}\otimes_A T\cong \p_1^{l_1}... \p_{j-1}^{l_{j-1}}\p_{j+1}^{l_{j+1}} ...\p_m^{l_m}T.\]A \emph{principal ideal domain} (PID) is an integral domain in which every ideal is generated by a single element. A PID is always a Dedekind domain but the converse is not true in general. A \emph{local ring} is a ring with a unique maximal ideal.

Suppose for now that $A$ is an integral domain, that $\p\subset A$ is a prime ideal and that $T$ is in $\H(A,A\sm\{0\})$. $T$ is called \emph{$\p$-primary} if $\p^dT=0$ for some $d>0$. Suppose there is an element $p\in\p\sm\p^2$, which we call a \emph{uniformiser}. Define\[\p^\infty:=\{up^k\,|\,\text{$u$ is a unit, }k>0\}\subset A,\]which is a multiplicative subset if we assume $\p=\overline{\p}$. In this case, $T$ is $\p$-primary if and only if $T$ is $\p^\infty$-torsion. If $\p$ is principal, any generator $p$ of $\p$ works as a uniformiser. Moreover, if $A$ is a Dedekind domain then every prime ideal $\p$ admits a uniformiser.

How do linking forms interact with prime ideals? If $\p\subset A$ is a prime ideal for a Dedekind domain $A$ and $(T,\lambda)$ is a non-singular $\eps$--symmetric linking form over $(A,A\sm\{0\})$ then, by considering our algebraic convention for the $A$--module structure of a torsion dual, we see the restriction of $\lambda_\p:=\lambda|_{T_\p}$ defines an isomorphism\[\lambda_{\p}\co T_\p\xrightarrow{\cong} T_{\overline{\p}}^\wedge.\]So if $\p\neq \overline{\p}$ then there is a non-singular linking form\[\left(T_\p\oplus T_{\overline{\p}},\lambda|_{T_\p\oplus T_{\overline{\p}}}\right).\]And if $\p=\overline{\p}$ there is a non-singular linking form $(T_\p,\lambda_\p)$. If $\p=\overline{\p}$ is a prime ideal in a Dedekind domain then the ring $A_\p=(\p\sm\{0\})^{-1}A$, called the \emph{localisation away from the prime $\p$}, is a local ring and a PID, with maximal ideal generated by the class in $A_\p$ of any uniformiser $p\in A$ of $\p$. If $T$ is $\p$-torsion there is a natural isomorphism of $A$--modules $T\cong A_\p\otimes_A T$ inducing an equivalence of categories between $\H(A,\p^\infty)$ and $\H(A_\p,\p^\infty)$. We have just shown the following for Dedekind domains:

\begin{proposition}\label{prop:dedekinddecomp}If $A$ is a Dedekind domain and $(T,\lambda)$ is a non-singular $\eps$--symmetric linking form over $(A,A\sm\{0\})$, then the primary decomposition of $T$ induces a natural decomposition into non-singular linking forms\[(T,\lambda)\cong\left(\bigoplus_{\p=\overline{\p}} (T_\p,\lambda_\p)\right)\oplus\left(\bigoplus_{\p\neq\overline{\p}}\left(T_\p\oplus T_{\overline{\p}},\lambda|_{T_\p\oplus T_{\overline{\p}}}\right)\right)\]If $\p=\overline{\p}$ for all prime $\p\subset A$ there is induced a natural isomorphism of commutative monoids\[\NN^\eps(A,A\sm\{0\})\cong\bigoplus_{\p}\NN^\eps(A,\p^\infty)\cong \bigoplus_\p\NN^\eps(A_\p,\p^\infty).\]
\end{proposition}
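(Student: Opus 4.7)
The plan is to verify both claims using observations already recorded in the discussion preceding the proposition.

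First I would pin down the essential algebraic fact that $\lambda$ respects the primary decomposition on both sides, in the sense that $\lambda(x)(y)=0$ whenever $x\in T_\p$, $y\in T_\q$ and $\q\neq\overline{\p}$. This is because $\lambda(x)(y)\in S^{-1}A/A$ is killed by powers of both $\overline{\p}$ (via the right-action on $T^\wedge$: $\overline{a}\cdot\lambda(x)=\lambda(ax)$) and $\q$ (via the $A$--linearity of $\lambda(x)$ as a map $T\to S^{-1}A/A$), and since $A$ is Dedekind the module $S^{-1}A/A$ splits into its primary components so cannot have a non-zero element annihilated by coprime ideals. Hence $\lambda|_{T_\p}$ factors through $T_{\overline{\p}}^\wedge$, giving $\lambda_\p\co T_\p\to T_{\overline{\p}}^\wedge$.

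Next I would use this to decompose $(T,\lambda)$ as a linking form. Assembling the decomposition $T=\bigoplus_\p T_\p$ together with its torsion dual $T^\wedge\cong\bigoplus_\p T_\p^\wedge$, the morphism $\lambda$ is block-diagonal with respect to the involution-induced pairing $\p\leftrightarrow\overline{\p}$. Non-singularity of $\lambda$ then forces each block to be an isomorphism, which supplies the non-singular sub-forms $(T_\p,\lambda_\p)$ when $\p=\overline{\p}$ and $(T_\p\oplus T_{\overline{\p}},\lambda|_{T_\p\oplus T_{\overline{\p}}})$ when $\p\neq\overline{\p}$. The $\eps$--symmetry is inherited from $\lambda$ in each case.

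For the monoidal statement I would assume $\p=\overline{\p}$ for all primes and observe that the primary decomposition is functorial under morphisms of $A$--modules (since $T_\p$ is defined intrinsically as the submodule annihilated by some power of $\p$). This promotes the pointwise decomposition to an isomorphism of commutative monoids $\NN^\eps(A,A\sm\{0\})\cong\bigoplus_\p\NN^\eps(A,\p^\infty)$. The second isomorphism $\NN^\eps(A,\p^\infty)\cong\NN^\eps(A_\p,\p^\infty)$ follows from the equivalence of categories $\H(A,\p^\infty)\cong\H(A_\p,\p^\infty)$ already recorded, together with the compatibility of the torsion dual $\Hom_A(-,S^{-1}A/A)$ with localisation at $\p$.

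The main obstacle to handle carefully is the first step: tracking both the left and right $A$--actions on $T^\wedge$ (with the involution twist) and recognising how the resulting two-sided annihilation condition kills the pairing between non-conjugate primary pieces. Once this key compatibility is in place, the remaining steps are mostly bookkeeping of the primary decomposition under module morphisms.
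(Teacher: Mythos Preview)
Your proposal is correct and follows exactly the approach the paper takes: the paper's ``proof'' is the paragraph immediately preceding the proposition, which argues (more tersely than you do) that the $A$--module structure on $T^\wedge$ forces $\lambda|_{T_\p}$ to land in $T_{\overline{\p}}^\wedge$, and then invokes the equivalence $\H(A,\p^\infty)\simeq\H(A_\p,\p^\infty)$ for the monoidal statement. Your expansion of the annihilation argument---tracking the two actions to see that $\lambda(x)(y)$ is killed by coprime prime powers in $S^{-1}A/A$---is precisely the content the paper compresses into ``by considering our algebraic convention for the $A$--module structure of a torsion dual.''
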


We now focus on those $\p$-primary $A$--modules where $\p=\overline{\p}$. We will now assume that $A$ is a local ring and a PID (this is justified by making the shift from $(A,\p^\infty)$ to $(A_\p,\p^\infty)$ in general). Let $T$ be a f.g.\ torsion $A_\p$-module. There are naturally defined summands $T_l$ of $T$ such that there are isomorphisms of $A_\p$-modules\[T_l\cong\bigoplus A_\p/(p^lA_\p),\qquad T\cong\bigoplus_{l=1}^\infty T_l,\] $\ann(T_l)=\p^l$, $pT_l=\ker(p^{l-1}\co T_l\to T_l)$ and $T_l$ can be regarded as a free $A_\p/(p^{l}A_\p)$-module.

However, the natural decomposition of modules $T\cong\bigoplus_{l=1}^\infty T_l$ does not extend \emph{naturally} to a decomposition of a linking form $(T,\lambda)$. Here is a way to do it non-naturally:

\begin{proposition}\label{prop:nonnatural}Let $A$ be a local ring and a PID, with maximal ideal $\p=\overline{\p}$. Suppose $(T,\lambda)$ is an $\eps$--symmetric linking form over $(A,\p^\infty)$, then there exist $\eps$--symmetric linking forms $(T_l,\lambda_l)$ over $(A,\p^\infty)$ such that there is a (non-natural) isomorphism\[(T,\lambda)\cong\bigoplus_{l=1}^\infty(T_l,\lambda_l).\]
\end{proposition}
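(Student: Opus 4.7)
The plan is to induct on $N := \max\{l \geq 1 : T_l \neq 0\}$. A preliminary observation: because $A$ is a discrete valuation ring and $T$ is a finitely generated torsion $A$-module, $T$ has finite length, and $T^\wedge$ has the same length, so the injection $\lambda\colon T\to T^\wedge$ is automatically an isomorphism and $(T,\lambda)$ may be assumed non-singular throughout. The base case $N\leq 1$ is immediate, and for the inductive step it suffices to produce an orthogonal splitting $(T,\lambda)\cong (U_N,\lambda|_{U_N})\oplus (T',\lambda|_{T'})$ of non-singular linking forms in which $U_N$ is free over $A/p^NA$ and every element of $T'$ is killed by $p^{N-1}$; the inductive hypothesis applied to $(T',\lambda|_{T'})$, together with setting $T_N:=U_N$, then completes the proof.

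The summand $U_N$ is built by iteratively splitting off small non-singular sub-linking forms via the following key lemma: if $x\in T$ has order $p^N$, then there is a submodule $C\subseteq T$, free over $A/p^NA$ of rank one or two, on which $\lambda$ restricts to a non-singular linking form. Granted the lemma, non-singularity gives an orthogonal decomposition $T = C\oplus C^\perp$ as linking forms; the $A/pA$-dimension of $p^{N-1}C^\perp$ is strictly less than that of $p^{N-1}T$, and iterating finitely many times eliminates all $p^N$-cyclic summands to produce the required splitting.

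To prove the key lemma, non-singularity of $\lambda$ forces $\lambda(x,-)\colon T\to \tfrac{1}{p^N}A/A$ to be surjective (its image has annihilator equal to $\ann(x)=p^NA$), so some $y\in T$ satisfies $\lambda(x,y)=u/p^N$ with $u$ a unit. If $\lambda(x,x)$ is itself a unit over $p^N$, take $C:=\langle x\rangle$. Otherwise, using the fixed decomposition $T=\bigoplus_l T_l$, only the $T_N$-component of $y$ contributes to the unit part of $\lambda(x,y)$ (each $y_l$ with $l<N$ satisfies $p^l\lambda(x,y_l)=0$, hence has order dividing $p^l$ in $S^{-1}A/A$), so $y$ may be replaced by this component and assumed to lie in $T_N$ with order $p^N$. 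A short order comparison then shows $x$ and $y$ are independent modulo $pT$, so $\langle x,y\rangle\cong (A/p^NA)^2$ with Gram matrix $\bigl(\begin{smallmatrix} a & u' \\ \eps\overline{u'} & b\end{smallmatrix}\bigr)$, where $u'$ is a unit and $a\in pA$. If $b$ is a unit set $C:=\langle y\rangle$; otherwise $b\in pA$, the determinant $ab-\eps u'\overline{u'}\equiv -\eps u'\overline{u'}\pmod{p}$ is a unit in $A/p^NA$, and $C:=\langle x,y\rangle$ works.

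The main obstacle is the rank-two case of the key lemma: when the chosen $x$ has non-unital self-pairing, one must produce a companion $y$ of the same maximal order such that the rank-two Gram matrix is non-singular. The $p$-adic valuation constraints forced by non-singularity of $\lambda$ and maximality of the order of $x$ do this automatically, guaranteeing the off-diagonal entry is a unit and hence controls the determinant of the Gram matrix modulo $p$. No use of the half-unit hypothesis is needed in this argument; it enters elsewhere in the double Witt group machinery.
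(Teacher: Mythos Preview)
Your overall strategy is sound and genuinely different from the paper's: the paper splits off the entire summand $T_d$ in one stroke by proving directly that $\lambda|_{T_d}$ is non-singular (using the key observation that $T_d/pT_d = T/pT$ when $d$ is maximal, so non-singularity of $\lambda_d$ reduces mod $p$ to non-singularity of $\lambda$ itself). You instead peel off rank-one or rank-two pieces in the style of the classical Witt decomposition. Both routes work, but there is a gap in your rank-two case.

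The problematic step is ``$x$ and $y$ are independent modulo $pT$, so $\langle x,y\rangle\cong (A/p^NA)^2$.'' This implication is false in general: independence mod $pT$ together with both elements having maximal order $p^N$ does \emph{not} force the submodule they generate to be free of rank two. For example, in $T=(A/p^2A)^2\oplus A/pA$ with $N=2$, the elements $x=(1,0,1)$ and $y=(1,0,0)$ are independent mod $pT$ and both have order $p^2$, yet $p\cdot x-p\cdot y=(0,0,p)=0$, so $\langle x,y\rangle$ is not free of rank two over $A/p^2A$. Your stated justification (an order comparison giving independence mod $pT$) does not use the $\lambda$-constraints at this point and so cannot rule out such configurations.

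The fix is simple: project $x$ to its $T_N$-component $x_N$ as well. Just as for $y$, one checks that $x_N$ has order $p^N$ and that $\lambda(x_N,x_N)$, $\lambda(x_N,y)$ differ from $\lambda(x,x)$, $\lambda(x,y)$ only by elements of $\tfrac{p}{p^N}A/A$, so the unit/non-unit pattern of the Gram entries is unchanged. Now $x_N,y$ both lie in the free $A/p^NA$-module $T_N$, where independence mod $p$ \emph{does} imply freeness of rank two, and your order comparison establishes that independence. With this adjustment the rest of your argument goes through unchanged.
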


\begin{proof} Choose a generator $p\in A$ of $\p$ and let $d$ be the largest integer such that $p^{d-1} T\neq 0$. Write $j\co T_d\hookrightarrow T$ for the inclusion. We claim that the restriction $\lambda_d:=j^\wedge\lambda j$ of $\lambda$ to $T_d$ defines a non-singular linking form $(T_d,\lambda_{d})$ and that there is a (non-natural) choice of isomorphism $(T, \lambda) \cong(T_d, \lambda_d)\oplus(T',\lambda_{T'})$ where $T':=T/T_d$. Note that if $d'$ is the largest integer such that $p^{d'-1}T'=0$ then $d'<d$ so the Lemma is proved by iteration.

There is an $A$--module isomorphism $T_d^\wedge\cong T^*_d:=\Hom_{A/p^dA}(T_d,A/p^dA)$ so that both $T_d$ and $T_d^\wedge\cong T^*_d$ have the structure of free $(A/p^dA)$-modules. $\lambda_d\in\Hom_A(T_d,T_d^\wedge)\cong\Hom_{A/p^dA}(T_d,T_d^*)$ is an $A$--module isomorphism if and only if it is an $(A/p^dA)$-module isomorphism. An element $a \in A/p^dA$ is a unit if and only if $[a] \in A/pA$ is a unit, hence the $(A/p^dA)$-module morphism $\lambda_d$ is an isomorphism if and only if the $(A/pA)$-module morphism \[[\lambda_d]\co T_d/pT_d \to T_d^*/pT_d^*\] is an isomorphism. But as $d$ is the largest integer for which $p^{d-1}T \neq 0$, we have $T_d/pT_d = T/pT$, so in fact \[[\lambda_d] = [\lambda] \co  T/pT \to T^\wedge/pT^\wedge.\] As $\lambda\co T\to T^\wedge$ is an $A$--module isomorphism, so $[\lambda]$ is an $(A/pA)$-module isomorphism. Therefore, tracing the line of reasoning backwards, we see that $(T_d,\lambda_d)$ is non-singular.

As $j$ was originally a split injection we make a choice of splitting so that $T'=T/T_d$ and $T\cong T_d\oplus T'$. Restricting $\lambda$ to $T'$ via this isomorphism defines a non-singular linking form $(T',\lambda')$ as claimed.
\end{proof}

\begin{remark}Note that \emph{a choice of splitting of the form} was made when we wrote $(T, \lambda) \cong(T_d, \lambda_d)\oplus(T', \lambda')$. This choice is not natural. In general, when there is no half-unit in the ring, it is not even true that the monoid $\NN^\eps(A_\p,\p^\infty)$ decomposes according to the decompositions $T\cong \bigoplus_{l=1}^\infty T_l$. For instance within $\NN^\eps(\Z,\Z\sm\{0\})$, the localisation away from the prime $2\Z$ has forms which interrelate among the different values of $l$, see Wall \cite{MR0156890} and Kawauchi-Kojima \cite{MR594531}.
\end{remark}

What can be salvaged from this non-canonical construction? By restricting to Dedekind domains and assuming that there is a half-unit in the ring we will derive in Theorem \ref{thm:MDT2} a canonical decomposition for the monoid of linking forms. Our decomposition will be with respect to Levine's notion \cite{MR564114} of an \emph{auxiliary $l$-form}. This, in turn, is a generalisation of Wall's invariant $(\rho_k(G),b')$ (see \cite[\textsection 5]{MR0156890}) when $G$ is a finite abelian group.

\begin{definition}Let $A$ be an integral domain and suppose $(p)=\p$ is a principal ideal. For each $m\geq 0$, define an $A$--module\[K_m(T)=\ker(p^m\co T\to T),\]easily seen to be independent of the choice of $p\in A$. Then there is a filtration\[\{0\}=K_0\subseteq K_1\subseteq K_2\subseteq\dots\subseteq K_{d-1}\subseteq K_d=K_{d+1}=\dots\]which terminates as $A$ was assumed Noetherian. Each $K_m$ is $\p$-primary, and $T$ is $\p$-primary if and only if $K_d=T$. The subquotients $J_{l}:=K_{l}/K_{l-1}$ are $A/\p$-modules and are used to define the \emph{$l$th auxiliary module} over $A/\p$\begin{equation}\label{eq:auxiliary}\Delta_l(T):=\coker(p\co J_{l+1}\to J_{l})\cong\frac{K_l}{K_{l-1}+pK_{l+1}}\cong \frac{p^lK_{l+1}}{p^{l+1}K_{l+2}},\end{equation}which is natural in $T$.

Now suppose $\p=\overline{\p}$. Given an $\eps$--symmetric linking form $(T,\lambda)$ over $(A,\p^\infty)$, define for each $l>0$ the \emph{$l$th auxiliary form}, $(\Delta_l(T),b_l(\lambda))$ over $A/\p$ given by \[b_l([x],[y]):=p^{l-1}\lambda(x,y)\in A/\p\]where $x,y\in K_l$ are choices of representative of $[x],[y]\in J_l/pJ_{l+1}$ and $A/\p\subset S^{-1}A/A$ via $x\mapsto x/p$.
\end{definition}

Here is an interesting example of a linking form with a non-trivial symmetry that illustrates the definitions so far.

\begin{example}\label{lem:andrewripoff}Let $\FF$ be a field with involution and let $A=\FF[z,z^{-1}]$ with involution extended linearly from $\overline{z}=z^{-1}$. This is the Laurent polynomial ring with field coefficients and it is a principal ideal domain. Monic polynomials $p(z)\in\FF[z,z^{-1}]$ generate involution invariant prime ideals $\p$ if and only if they are of the type $p(z)=u_p\overline{p}(z)=(z-a)$ where $\overline{a}a=1$ and $u_p=-az$. For any such $a$ and corresponding $p$, there is a linking form $(A/p^lA,\lambda)$ over $(A,\p^\infty)$, with \[\lambda(x,y)= x\overline{y}/(z-a)^l\in \FF(z)/\FF[z,z^{-1}].\]This is easily checked to be $(u_p)^l$-symmetric. The corresponding auxiliary form $(\Delta_l(A/p^lA),b_l(\lambda))$ is the standard inner product on the field $A/(z-a)A$ given by the symmetric pairing $(x,y)\mapsto x\overline{y}\in A/pA$.
\end{example}

A vital tool for our calculations is following theorem of Levine, which we will call the \emph{Main Decomposition Theorem}.

\begin{theorem}[Main Decomposition Theorem {\cite[Theorem 20.1]{MR564114}}]\label{thm:levine} Suppose $A$ is a unique factorisation domain containing a half-unit, $\p=(p)=(\overline{p})=\overline{\p}$ is a prime ideal and $A/\p$ is a Dedekind domain. If $(T,\lambda)$, $(T',\lambda')$ are non-singular $\eps$--symmetric linking forms over $(A,\p^\infty)$, then there is an isomorphism of linking forms $(T,\lambda)\cong(T',\lambda')$ if and only if there is an isomorphism of $A$--modules $T\cong T'$ that induces an isomorphism of forms $(\Delta_l(T),b_l(\lambda))\cong(\Delta_l(T'),b_l(\lambda'))$ for each $l$.
\end{theorem}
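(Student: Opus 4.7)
The forward direction is immediate: any isomorphism of linking forms is in particular an $A$-module isomorphism, and the constructions $\Delta_l$ and $b_l$ are natural in the form.

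For the converse my plan has three stages. First, repeatedly apply Proposition \ref{prop:nonnatural} to produce orthogonal splittings
\[(T, \lambda) \cong \bigoplus_{l=1}^{d} (U_l, \mu_l), \qquad (T', \lambda') \cong \bigoplus_{l=1}^{d} (U_l', \mu_l'),\]
in which each $U_l$, $U_l'$ is a free $A_\p/p^lA_\p$-module and each summand is non-singular; the hypothesised module isomorphism $T \cong T'$ forces the ranks of $U_l$ and $U_l'$ to agree via the structure theorem over the DVR $A_\p$. Second, a direct calculation of the filtration $K_m$ shows $\Delta_l(A_\p/p^kA_\p) = 0$ for $k \neq l$ and $\Delta_l(A_\p/p^lA_\p) \cong A/\p$, and consequently $(\Delta_l(T), b_l(\lambda))$ is naturally identified with the mod-$p$ reduction of $(U_l,\mu_l)$, viewed as an $\eps$-symmetric form over $A/\p$. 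The given isomorphism of auxiliary forms therefore supplies, for each $l$, an isomorphism $(U_l/pU_l, [\mu_l]) \cong (U_l'/pU_l', [\mu_l'])$ over $A/\p$.

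The third and most substantial stage is to lift each such mod-$p$ isomorphism to an isomorphism of linking forms $(U_l,\mu_l)\cong(U_l',\mu_l')$, which I would establish by Hensel-style induction on the power of $p$. Choose bases so that $\mu_l,\mu_l'$ are represented by $\eps$-hermitian matrices $M, M'$ over $A_\p/p^lA_\p$, both invertible mod $p$ by non-singularity. Given a congruence $\overline{f}^*(M \bmod p)\overline{f} = M' \bmod p$, take any lift $\widetilde{f}$ over $A_\p/p^lA_\p$ and write $\widetilde{f}^* M \widetilde{f} = M' + pN$ with $N$ an $\eps$-hermitian matrix. Replace $\widetilde{f}$ by $\widetilde{f}(I + pG)$: cancelling the $pN$ term modulo $p^2$ requires $G^* M' + M' G \equiv -N \pmod{p}$. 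With $s$ the half-unit of $A$, set $G := -s(M')^{-1}N$ and compute, using the identities $((M')^{-1})^* M' = \eps I$ and $N^* = \eps N$:
\[G^* M' + M' G = -\overline{s}\,N^*((M')^{-1})^* M' - sN = -\overline{s}N - sN = -(\overline{s}+s)N = -N \pmod{p}.\]
Iterating this correction through the exponents $p^2,\ldots,p^{l-1}$ produces the required lift. Direct-summing the resulting isomorphisms over $l$ and composing with the orthogonal splittings yields $(T,\lambda)\cong(T',\lambda')$.

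The principal obstacle is this Hensel-style lifting, and its indispensable ingredient is the half-unit: it is precisely what renders the bilinear equation $G^*M'+M'G\equiv -N$ solvable in a way that respects the $\eps$-hermitian structure, since a single application of $s+\overline{s}=1$ is the final step in the key identity above. As the remark following Proposition \ref{prop:nonnatural} indicates, when the ring lacks a half-unit (e.g.\ $A=\Z$) the analogous classification genuinely fails, and one encounters the entanglement of forms of distinct exponents studied by Wall and Kawauchi-Kojima.
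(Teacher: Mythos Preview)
The paper does not prove this result; it is quoted from Levine \cite[Theorem 20.1]{MR564114} and used as a black box in Theorem~\ref{thm:MDT2}. Your outline---split off one exponent at a time, then Hensel-lift a mod-$p$ isometry using the half-unit---is the right shape.

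Two comments. First, a small one: you claim $N$ is $\eps$-hermitian, but the hypothesis gives only $(p)=(\overline{p})$, not $p=\overline{p}$, so a unit $u_p$ intervenes when you pull $p$ past the involution. The fix is painless: work directly with $X:=\widetilde{f}^{\,*}M\widetilde{f}-M'$ (which \emph{is} $\eps$-hermitian) and set $H:=-s(M')^{-1}X$; then $H^{*}M'+M'H=-X$ and $H$ is divisible by $p$, so the induction runs as before.

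Second, and more substantively: your first stage invokes Proposition~\ref{prop:nonnatural}, which is stated for $A$ a local PID, while here $A$ is only a UFD. You gesture at ``the DVR $A_\p$'', but a $\p$-primary module in $\H(A,\p^\infty)$ need not already be an $A_\p$-module---take $A=k[x,y]$, $\p=(x)$, $T=A/\p=k[y]$, where multiplication by $y\notin\p$ is not invertible on $T$, so $T\not\cong T_\p=k(y)$. Thus one cannot freely localise, prove the isomorphism over $A_\p$, and expect to descend. Your argument never uses the hypothesis that $A/\p$ is a Dedekind domain, and this is presumably exactly where it is needed: to control the module theory of $A/\p^l$ well enough to carry out the orthogonal splitting and the lifting directly over $A$.
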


Using this, we will prove the following monoid decomposition theorem.

\begin{theorem}\label{thm:MDT2}Let $A$ be a local ring and a PID with maximal ideal $(p)=\p$ where $p=u_p\overline{p}$ for some unit $u_p$. Define $\NN^\eps_{\p,l}\subset \NN^\eps(A,\p^\infty)$ to be the submonoid consisting of those linking forms $(T,\lambda)$ with $T=T_l$. Then for each $l>0$ there is an isomorphism of commutative monoids\[\NN^\eps_{\p,l}\cong \NN^{v_p}(A/\p)\qquad v_p:=\left\{\begin{array}{lll}\eps&&\text{$l$ even,}\\u_p\eps&&\text{$l$ odd.}\end{array}\right.\]Furthermore, there is a natural isomorphism of commutative monoids\[\NN^\eps(A,\p^\infty)\cong\bigoplus_{l=1}^\infty\NN^{v_p}(A/\p).\]\end{theorem}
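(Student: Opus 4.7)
The plan is to define a natural monoid homomorphism
\[\Phi \co \NN^\eps(A,\p^\infty) \to \bigoplus_{l=1}^\infty \NN^{v_p}(A/\p),\qquad (T,\lambda)\mapsto \bigoplus_l \bigl(\Delta_l(T),\, u_p^{-\lfloor l/2\rfloor}\,b_l(\lambda)\bigr),\]
using Levine's auxiliary forms together with a rescaling by $u_p^{-\lfloor l/2\rfloor}$ that I will justify, and to prove $\Phi$ is an isomorphism of monoids. The first displayed isomorphism of the theorem then follows by restricting $\Phi$ to the submonoid of $(T,\lambda)$ with $\Delta_m(T)=0$ for $m\neq l$, which is precisely $\NN^\eps_{\p,l}$.

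To see $\Phi$ takes values in the claimed target, I would first unwind the symmetry of $b_l(\lambda)$. From $p=u_p\overline p$ and $u_p\overline{u_p}=1$ one deduces $\overline p^l = u_p^{-l}p^l$, and then starting from $\lambda(x,y)=\eps\overline{\lambda(y,x)}$ a direct calculation gives $b_l([x],[y]) = \eps u_p^l\,\overline{b_l([y],[x])}$, so $b_l$ is $(\eps u_p^l)$-symmetric as originally defined. Multiplying by $u_p^{-\lfloor l/2\rfloor}\in A/\p$ rescales the symmetry factor by $u_p^{-2\lfloor l/2\rfloor}$, leaving exactly $\eps u_p^{l\bmod 2}=v_p$. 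Additivity of $\Phi$ and the finiteness of the nonzero components are immediate.

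Injectivity of $\Phi$ is a direct application of Levine's Main Decomposition Theorem~\ref{thm:levine}: if $\Phi(T,\lambda)\cong\Phi(T',\lambda')$, then in particular $\dim_{A/\p}\Delta_l(T)=\dim_{A/\p}\Delta_l(T')$ for every $l$, and over the local PID $A$ this dimension equals the multiplicity of $A/p^l$ in the cyclic decomposition of the f.g.\ torsion module. Hence $T\cong T'$ as $A$-modules in a manner compatible with all auxiliary forms, and Theorem~\ref{thm:levine} upgrades this to an isomorphism $(T,\lambda)\cong(T',\lambda')$ of linking forms.

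For surjectivity, given $(V,\beta)\in\NN^{v_p}(A/\p)$, I would construct $(T_l,\lambda_l)\in\NN^\eps_{\p,l}$ with $\Phi(T_l,\lambda_l)=(V,\beta)$ in the $l$th factor. Take $T_l$ free over $A/p^lA$ on a basis $\{e_i\}$ lifting a basis of $V$; rescale $\beta$ to the $(\eps u_p^l)$-symmetric form $u_p^{\lfloor l/2\rfloor}\beta$, lift its matrix arbitrarily to $(\tilde c_{ij})$ in $A$, and use the half-unit $s$ to symmetrise by setting
\[d_{ij} := s\,\tilde c_{ij} + \overline{s}\,\eps u_p^l\,\overline{\tilde c_{ji}}\in A.\]
The identities $s+\overline s=1$ and $u_p\overline{u_p}=1$ force $d_{ij}=\eps u_p^l\,\overline{d_{ji}}$ on the nose in $A$, while reducing modulo $p$ and using the $(\eps u_p^l)$-symmetry of $(\tilde c_{ij})$ gives $d_{ij}\equiv \tilde c_{ij}\pmod p$. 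Defining $\lambda_l(e_i,e_j):=d_{ij}/p^l + A$, the first identity combined with $\overline p^l = u_p^{-l}p^l$ makes $\lambda_l$ genuinely $\eps$-symmetric, and non-degeneracy of $\beta$ lifts to non-singularity of $\lambda_l$ by a Nakayama argument applied to $(d_{ij})$ modulo $p^l$ over the local ring $A/p^lA$. Arbitrary elements of $\bigoplus_l\NN^{v_p}(A/\p)$ are handled by direct sum. The main obstacle is the interplay between the rescaling and the half-unit symmetrisation: pinning down the symmetry parity $v_p$ after multiplying by $u_p^{-\lfloor l/2\rfloor}$, and checking that the lift $(d_{ij})$ is $(\eps u_p^l)$-symmetric in $A$ (not merely modulo $p^l$), so that $\lambda_l$ is $\eps$-symmetric in $S^{-1}A/A$ without any further correction.
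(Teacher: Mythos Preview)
Your approach is essentially the same as the paper's: both define the map via Levine's auxiliary forms $(\Delta_l,b_l)$, invoke the Main Decomposition Theorem for injectivity, lift bases for surjectivity, and adjust the $(\eps u_p^l)$-symmetry to $v_p$ by rescaling with a power of $u_p$ (your $u_p^{-\lfloor l/2\rfloor}$ is exactly the paper's ``standard trick'' $\alpha\mapsto \overline u\,\alpha$ applied with $u=u_p^{\lfloor l/2\rfloor}$). Your explicit half-unit symmetrisation of the lifted matrix is in fact more careful than the paper's brief surjectivity sketch.

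One point to tighten: for injectivity you write that $T\cong T'$ ``in a manner compatible with all auxiliary forms'', but this compatibility is not free---you must actually produce a single $A$-module isomorphism inducing each given $f_l\co(\Delta_l(T),b_l)\cong(\Delta_l(T'),b'_l)$. The paper does this by first (non-naturally) splitting $(T,\lambda)\cong\bigoplus_l(T_l,\lambda_l)$ via Proposition~\ref{prop:nonnatural}, then lifting each $f_l$ to an $A/p^l$-module isomorphism $T_l\cong T'_l$ by choosing bases and observing that a matrix over $A/\p$ is invertible iff its lift to $A/p^l$ is. Your argument implicitly needs this step before Theorem~\ref{thm:levine} applies.
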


\begin{proof}Suppose $T=T_l$, $T'=T'_l$ for some $l>0$. Then non-singular linking forms $(T,\lambda)$, $(T',\lambda')$ over $(A,\p^\infty)$ have naturally defined auxiliary forms $(\Delta_l,b_l)$, $(\Delta'_l,b'_l)$ (respectively) over $A/\p$. Choose a basis $e_1,\dots,e_n$ of $\Delta_l$ and $e_1',\dots,e_m'$ of $\Delta_l'$, so that a morphism $f\co \Delta_l\to\Delta_l'$ is a matrix $(f_{ij})$ with respect to these bases. We may lift these to bases $[e_i]$ and $[e_i']$ of $T$ and $T'$ respectively (regarded as free $A/p^lA$--modules). This allows us to define an $A/p^lA$--module morphism $[f]\co T\to T'$ by $[f]([e_i])=\sum_{j}f_{ij}[e_j]$. If $m=n$, then $f$ is an isomorphism if and only if the determinant of $f$ is a unit in $A/\p$. But if $u\in A$, then $u\in A/\p$ is a unit if and only if $u\in A/\p^k$ is a unit. Hence if there is an isomorphism $f\co (\Delta_l,b_l)\cong(\Delta_l',b_l')$ then it is induced by an isomorphism $T\cong T'$. This argument, taken together with the Main Decomposition Theorem, shows there is then a well-defined injective morphism of monoids \begin{equation}\label{eq:monoidmorph}\NN^\eps_{\p,l}\to \NN^{u_p^l\eps}(A/\p);\qquad(T,\lambda)\mapsto (\Delta_l(T),b_l(\lambda)).\end{equation}To show it is surjective, take any non-singular $\eps$--symmetric form $(\Delta,b)$ over $A/\p$ with a choice of basis $e_1,\dots, e_n$ of $\Delta$. Using this basis, we define a free $A/\p^l$-module $T$ and an $A/\p^l$-module morphism $\lambda\co T\to T^*=\Hom_{A/\p^l}(T,A/\p^l)$ by $\lambda(x,y):=p^{l-1}b(x,y)$. Then $(\Delta_l(T),b_l(\lambda))=(\Delta,b)$ as required. Therefore the morphism (\ref{eq:monoidmorph}) is an isomorphism of monoids.

The Main Decomposition Theorem gives a well-defined, natural morphism of commutative monoids\[\NN^\eps(A,\p^\infty)\to\bigoplus_{l=1}^\infty\NN^{u_p^l\eps}(A/\p);\qquad (T,\lambda)\mapsto \bigoplus_{l=1}^\infty (\Delta_l(T),b_l(\lambda)).\]By Proposition \ref{prop:nonnatural}, for any non-singular $\eps$--symmetric linking form $(T,\lambda)$ over $(A,\p^\infty)$ we can make a choice of decomposition $(T,\lambda)\cong\bigoplus_{l=1}^\infty(T_l,\lambda_l)$, with $(T_l,\lambda_l)$ in $\NN^\eps_{\p,l}$. By the Main Decomposition Theorem, this isomorphism induces an isomorphism $(\Delta_l(T),b_l(\lambda))\cong(\Delta_l(T_l),b_l(\lambda_l))$ for each $l>0$. We showed earlier how to make a choice of lift of elements in $\NN^{u_p\eps}(A/\p)$ to elements in $\NN^\eps_{\p,l}\subset \NN^\eps(A,\p^\infty)$ and our choice of decomposition $(T,\lambda)\cong\oplus_{l=1}^\infty(T_l,\lambda_l)$ shows that $\NN^\eps(A,\p^\infty)$ is generated by such lifted elements.

We only need to show that there are no interrelations between the lifted generators for different values of $l$. But suppose we have a finite sum \[\sum_{l=1}^\infty\sum_k F_{l,k}=\sum_{l=1}^\infty\sum_k F'_{l,k}\in \NN^\eps(A,\p^\infty),\]where $F_{l,k},F'_{l,k}\in\NN^\eps_{\p,l}$ are non-singular linking forms for each $k$. Then writing the auxiliary forms of $F_{l,k}$ and $F'_{l,k}$ as $\Delta_{l,k}$ and $\Delta'_{l,k}$ respectively we must have for each $l>0$\[\sum_k\Delta_{l,k}=\sum_k\Delta'_{l,k}\in \NN^{u_p^l\eps}(A/\p).\]So any relation in $\NN^\eps(A,\p^\infty)$ results in a finite set of relations, indexed by $l$, each relation within a respective copy of $\NN^{u_p^l\eps}(A/\p)$. Conversely, any such finite set of relations on $\NN^\eps(A/\p)$, indexed by $l$, will recover a relation in $\NN^\eps(A,\p^\infty)$ by the Main Decomposition Theorem. Hence there is a relation in $\NN^\eps(A,\p^\infty)$ if and only if it is generated by a finite set of relations within respective $\NN^{u_p\eps}(A/\p)$, independent for each $l>0$. 

We finally have to modify the symmetries to the $v_p$ in the statement of the theorem, but we do this by a standard trick. If $u=\overline{u}^{-1}$ is a unit in a ring $R$ and $(K,\alpha)$ is a $(u^2\eps)$--symmetric form over $R$ then the pairing $\alpha_u(x,y):=\alpha(\overline{u}x,y)=\overline{u}\alpha(x,y)=u\eps\overline{\alpha(y,x)}=\eps\overline{\alpha(\overline{u}y,x)}=\eps\overline{\alpha_u(y,x)}$ is $\eps$--symmetric. The assignment $(K,\alpha)\mapsto (K,\alpha_u)$ defines an isomorphism $\NN^{u^2\eps}(R)\cong \NN^\eps(R)$. Hence we may reduce our symmetries to the $v_p$ claimed. This completes the proof.\end{proof}

Our proof says nothing about what any of the generators of the monoids \emph{are}. We have not computed the monoids involved, hence the conspicuous lack of something like Hensel's Lemma as employed by Wall \cite[\textsection 5]{MR0156890}.

\subsection{Double Witt groups}\label{sec:witt}

We will now define several ways in which an $\eps$--symmetric form or linking form can be considered trivial, that all involve the idea a lagrangian submodule.

\begin{definition}A \emph{lagrangian} for a non-singular $\eps$--symmetric form $(P,\theta)$ over $A$ is a submodule $j \co  L \hookrightarrow P$ in $\A(A)$ such that the sequence\[0\to L\xrightarrow{j}P\xrightarrow{j^*\theta} L^*\to0\]is exact. As modules in the category $\A(A)$ are projective, all surjective morphisms split, and a lagrangian is always a direct summand. If $(P,\theta)$ admits a lagrangian it is called \emph{metabolic}. If $(P,\theta)$ admits two lagrangians $j_\pm\co L_\pm\hookrightarrow P$ (labelled ``$+$'' and ``$-$'') such that they are complementary as submodules\[\left(\begin{matrix}j_+\\j_-\end{matrix}\right)\co L_+\oplus L_-\xrightarrow{\cong} P,\]then the form is called \emph{hyperbolic}.
\end{definition}

\begin{remark}If $j\co L\hookrightarrow P$ is a submodule, then $L$ is a lagrangian if and only if \[j(L)=L^\perp:=\ker(j^*\theta\co T\to L^*)\subset T.\] In this case the form $\theta$ vanishes on $j(L)$ and $L$ is referred to as a `maximally self-annihilating submodule'. The reader might prefer to define a lagrangian in these terms.\end{remark}

\begin{lemma}\label{lem:splitishyp1}If $j\co L\hookrightarrow P$ is a lagrangian for a non-singular $\eps$--symmetric form $(P,\theta)$ over $A$ then for any splitting $k\co L^*\to P$ of $j^*\theta$, there is an isomorphism of forms\[(j\,\,k)\co \left(L\oplus L^*,\left(\begin{matrix}0&1\\\eps &k^*\theta k\end{matrix}\right)\right)\xrightarrow{\cong}(P,\theta).\] If $\theta=\psi+\eps\psi^*$ (for instance if $A$ contains a half-unit, set $\psi=s\theta$) then we may choose a splitting $k'$ such that there is an isomorphism of forms\[(j\,\, k')\co \left(L\oplus L^*,\left(\begin{matrix}0&1\\\eps&0\end{matrix}\right)\right)\xrightarrow{\cong}(P,\theta),\] in particular all metabolic $\eps$--symmetric forms $(P,\theta)$ over $A$ are hyperbolic in this case.
\end{lemma}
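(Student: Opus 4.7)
The plan is to first build the promised isomorphism from a naive choice of splitting, then upgrade it in the presence of $\theta=\psi+\eps\psi^*$ by modifying the splitting to kill the bottom-right entry $k^*\theta k$.

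\emph{Step 1 (obtaining a splitting).} Since $L$ lies in $\A(A)$, so does $L^*$, hence $L^*$ is projective and the short exact sequence defining a lagrangian splits. I pick any right inverse $k\co L^*\to P$ of $j^*\theta$. Because the sequence splits via $k$, the map $(j\,\,k)\co L\oplus L^*\to P$ is an $A$--module isomorphism.

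\emph{Step 2 (computing the pullback of $\theta$).} I would expand $(j\,\,k)^*\theta(j\,\,k)$ as a $2\times 2$ block matrix with entries $j^*\theta j$, $j^*\theta k$, $k^*\theta j$ and $k^*\theta k$ (using the natural isomorphisms $L^{**}\cong L$ and $L^{***}\cong L^*$ to identify the codomain blocks). The lagrangian property gives $j^*\theta j=0$, the splitting condition gives $j^*\theta k=\id_{L^*}$, and dualising this identity together with $\theta=\eps\theta^*$ gives $k^*\theta j = \eps\cdot\id_L$. This exhibits the pullback of $\theta$ as exactly the advertised matrix
\[\begin{pmatrix}0&1\\\eps&k^*\theta k\end{pmatrix},\]
and combined with Step 1 proves the first claim. (A small sanity check: $\eps$-symmetry of this matrix is immediate from $\eps^2=1$ and the $\eps$-symmetry of $k^*\theta k$.)

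\emph{Step 3 (killing $k^*\theta k$ when $\theta=\psi+\eps\psi^*$).} Here I set $\nu:=k^*\psi k\co L^*\to L$, so that $k^*\theta k=\nu+\eps\nu^*$, and consider the modified splitting
\[k' \co = k - \eps\, j\nu.\]
Since $j^*\theta j=0$, one checks $j^*\theta k'=j^*\theta k=\id_{L^*}$, so $k'$ is still a splitting and Step 2 applies to it. A direct expansion then yields
\[k'^*\theta k' \;=\; k^*\theta k - \eps(k^*\theta j)\nu - \eps\nu^*(j^*\theta k) + \nu^*(j^*\theta j)\nu \;=\; (\nu+\eps\nu^*) - \nu - \eps\nu^* \;=\; 0,\]
so the matrix in the first part becomes the standard hyperbolic form, and $L^*\hookrightarrow P$ via $k'$ is a second lagrangian complementary to $j(L)$.

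\emph{Main obstacle.} Nothing here is deep; the only thing to be careful about is the consistent use of the natural identifications $L\cong L^{**}$ when writing down and manipulating the four matrix entries, especially in checking that the dual of the identity $j^*\theta k=\id_{L^*}$ yields $k^*\theta j=\eps\cdot\id_L$ with the correct sign. Once that piece of bookkeeping is in place, both claims fall out of the same block computation.
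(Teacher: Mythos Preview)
Your proof is correct and is precisely the standard argument. The paper itself does not give a proof at all---it simply refers the reader to Ranicki \cite[2.2]{MR560997}---so you have supplied the details the paper omits. Your Step~3 modification $k'=k-\eps j\nu$ with $\nu=k^*\psi k$ is exactly the classical trick, and your block computations are accurate (including the sign bookkeeping you flag as the main obstacle).
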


\begin{proof}See Ranicki \cite[2.2]{MR560997}.
\end{proof}

\begin{definition}Suppose $(A,S)$ defines a localisation. A \emph{(split) lagrangian} for a non-singular $\eps$--symmetric linking form $(T,\lambda)$ over $(A,S)$ is a submodule $j \co  L \hookrightarrow T$ in $\H(A,S)$ such that the sequence\[0\to L\xrightarrow{j}T\xrightarrow{j^\wedge\lambda} L^\wedge\to0\]is (split) exact. If $(T,\lambda)$ admits a (split) lagrangian it is called \emph{(split) metabolic}. If $(T,\lambda)$ admits two lagrangians $j_\pm\co L_\pm\hookrightarrow T$ such that they are complementary as submodules\[\left(\begin{matrix}j_+\\j_-\end{matrix}\right)\co L_+\oplus L_-\xrightarrow{\cong} T,\]then the form is called \emph{hyperbolic}.
\end{definition}

\begin{lemma}\label{lem:splitishyp2}If $A$ contains a half-unit then all split metabolic $\eps$--symmetric linking forms $(T,\lambda)$ over $(A,S)$ are hyperbolic.
\end{lemma}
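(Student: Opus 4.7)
The plan is to mirror the proof of Lemma \ref{lem:splitishyp1} in the linking form setting, with the torsion dual $T^\wedge$ and the category $\H(A,S)$ replacing $P^*$ and $\A(A)$; everything formally goes through because torsion duality is a duality on $\H(A,S)$ with the same naturality properties as ordinary duality on $\A(A)$.

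First I would choose a splitting $k\co L^\wedge\to T$ of $j^\wedge\lambda$, which exists precisely by the split hypothesis. The combined map $(j\,\,k)\co L\oplus L^\wedge\xrightarrow{\cong} T$ is then an isomorphism, and conjugating $\lambda$ by it produces the matrix $\left(\begin{smallmatrix}0&1\\\eps&\beta\end{smallmatrix}\right)$, where $\beta:=k^\wedge\lambda k\co L^\wedge\to L$ is $\eps$-symmetric in the sense that $\beta^\wedge=\eps\beta$. The upper-right entry is $j^\wedge\lambda k=\id_{L^\wedge}$ by the choice of splitting; the lower-left entry $k^\wedge\lambda j=\eps$ follows by applying $(-)^\wedge$ to this identity and using $\lambda^\wedge=\eps\lambda$.

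Next I would modify the splitting by $k':=k-j\alpha$ for some $\alpha\co L^\wedge\to L$ to be chosen. The map $(j\,\, k')\co L\oplus L^\wedge\to T$ remains an isomorphism, being a triangular change of basis, and a direct computation shows the matrix of $\lambda$ in this new basis becomes $\left(\begin{smallmatrix}0&1\\\eps&\beta-\eps\alpha-\alpha^\wedge\end{smallmatrix}\right)$. So the problem reduces to solving $\eps\alpha+\alpha^\wedge=\beta$. Taking $\alpha:=\eps s\beta$ for the half-unit $s\in A$ works, since $(\eps s\beta)^\wedge=\overline{\eps s}\cdot\beta^\wedge=\eps\overline{s}\cdot\eps\beta=\overline{s}\beta$, so $\eps\alpha+\alpha^\wedge=s\beta+\overline{s}\beta=(s+\overline{s})\beta=\beta$ as required.

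With this choice $j(L)$ and $k'(L^\wedge)$ are complementary lagrangians in $T$, exhibiting $(T,\lambda)$ as hyperbolic. The only real technical point is the convention for how the torsion dual interacts with scalar multiplication, namely $(c\cdot f)^\wedge=\overline{c}\cdot f^\wedge$, dictated by the twisted $A$-module structure on $T^\wedge$; once this bookkeeping is in order, the argument is otherwise formal and uses the half-unit hypothesis exactly once, in defining $\alpha$.
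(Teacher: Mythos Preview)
Your proposal is correct and is precisely the argument the paper has in mind: the paper's proof simply says ``The proof of Lemma~\ref{lem:splitishyp1} carries through exactly the same, using torsion duals,'' and you have written out those details, including the splitting-modification step $k'=k-j\alpha$ with $\alpha=\eps s\beta$ that uses the half-unit. Your flagged bookkeeping point about $(c\cdot f)^\wedge=\overline{c}\cdot f^\wedge$ is exactly the convention induced by the twisted $A$--action on $T^\wedge$ described before Lemma~\ref{lem:ext1}, so the computation goes through as stated.
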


\begin{proof}The proof of Lemma \ref{lem:splitishyp1} carries through exactly the same, using torsion duals.
\end{proof}

\begin{remark}In general, not assuming the presence of a half-unit, for a \textbf{linking form} there is a hierarchy:\[\text{hyperbolic}\,\,\subsetneq\,\,\text{split metabolic}\,\,\subsetneq\,\,\text{metabolic}.\]And for a \textbf{form}\[\text{hyperbolic}\,\,\subsetneq\,\,\text{split metabolic}\,\,=\,\,\text{metabolic}.\]Moreover, Lemmas \ref{lem:splitishyp1} and \ref{lem:splitishyp2} show that the presence of a half-unit destroys the distinction between split metabolic and hyperbolic for both forms and linking forms. More generally, if the form or linking form admits a quadratic extension, this distinction is destroyed.
\end{remark}

\begin{example}

Some examples to illustrate the differences between the types. \begin{enumerate}[(i)]
\item A symmetric form over $\Z$ that is metabolic but not hyperbolic: \[(P,\theta)=\left(\Z\oplus\Z,\left(\begin{array}{cc}0&1\\1&1\end{array}\right)\right)\] with lagrangian $\Z\oplus 0$. The property of taking only even values is preserved under isomorphisms of forms over $\Z$ (cf. `Type I' inner product spaces in Milnor-Husemoller \cite[4.2]{MR0506372}), so as $\theta((x,y),(x,y))=2xy+y^2$, we can deduce this is not isomorphic to the only possible hyperbolic form here, which is $\left(\Z\oplus \Z,\lmat0&a\\b&0\rmat\right)$ for $a,b\in\{\pm1\}$. 
\item A symmetric linking form over the local ring $(\Z_{\p},\p^\infty)$ with $\p=3\Z$ that is metabolic but not split metabolic: $(T,\lambda)=(\Z/9\Z,(a,b)\mapsto ab/9)$ with lagrangian $\Z/3\Z$. This is the only possible lagrangian and it is not a direct summand.
\item To see a symmetric linking form over $(\Z,\Z\sm\{0\})$ that is split metabolic but not hyperbolic we refer the reader to Banagl-Ranicki \cite{MR2189218}.
\end{enumerate}
\end{example}

\subsection{Witt groups: single or double?}

We will now construct the classical Witt group and our double Witt groups using the following basic construction.

\begin{definition}[Monoid construction]\label{def:monoid} Let $(M, +)$ be an commutative monoid and let $N$ be a submonoid of $M$. Consider the equivalence relation: for $m_1, m_2 \in M$, define $m_1 \sim m_2$ if there exists $n_1, n_2 \in N$ such that $m_1 + n_1 = m_2 + n_2$. Then the set of equivalence classes $M/\sim$ inherits a structure of abelian monoid via $[m] + [m'] := [m + m']$. It is denoted by $M/N$. Assume that for any element $m \in M$ there is an element $m' \in M$ such that $m + m' \in N$, then $M/N$ is an abelian group with $-[m] = [m']$. \emph{It is then canonically isomorphic to the quotient of the Grothendieck group of $M$ by the subgroup generated by $N$.}
\end{definition}

We wish to emphasise that even if the monoid construction $M/N$ is a group, the group does not necessarily have the property that $[m]=0\in M/N$ implies $m\in N$. We will often prove this as an additional property, but it does not come for free. The monoid construction is in general the group of \emph{stable isomorphism classes} in $M$, where one is allowed to `stabilise an isomorphism' by elements of the submonoid.

\begin{lemma}If $(P,\theta)$ is an $\eps$--symmetric form over $A$ then the form \[(P,\theta)+(P,-\theta)=(P\oplus P,\theta\oplus-\theta)\]is split metabolic.
\end{lemma}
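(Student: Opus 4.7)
The strategy is to exhibit an explicit lagrangian for $(P \oplus P, \theta \oplus -\theta)$, which under the assumption of non-singularity of $(P,\theta)$ makes the sum split metabolic. The natural candidate is the diagonal submodule, namely
\[
j \co L := P \hookrightarrow P \oplus P; \qquad x \mapsto (x,x).
\]
This is a submodule in $\A(A)$ since $P$ is itself in $\A(A)$.

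The verification has three steps. First, $(\theta \oplus -\theta)$ vanishes on the diagonal: for $x,y \in P$ we have $(\theta \oplus -\theta)((x,x),(y,y)) = \theta(x)(y) - \theta(x)(y) = 0$, so $j(L) \subset L^\perp$. Second, I would identify the adjoint map and check exactness. Under the canonical identification $L^* \cong P^*$, the map $j^*(\theta\oplus -\theta) \co P \oplus P \to L^*$ sends $(a,b) \mapsto \theta(a) - \theta(b)$. Since $\theta$ is an isomorphism (non-singularity), this is surjective (set $b=0$ and $a=\theta^{-1}(\phi)$), and its kernel is precisely $\{(a,a) : a \in P\} = j(L)$, so the sequence
\[
0 \to L \xrightarrow{j} P \oplus P \xrightarrow{j^*(\theta\oplus -\theta)} L^* \to 0
\]
is exact. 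Third, I would exhibit a splitting directly: the map $s \co L^* \cong P^* \to P \oplus P$ given by $\phi \mapsto (\theta^{-1}(\phi), 0)$ satisfies $j^*(\theta \oplus -\theta) \circ s = \mathrm{id}_{L^*}$, so the short exact sequence is split.

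There is no real obstacle here; the argument is essentially a direct computation. The only point worth emphasising is that non-singularity of $(P,\theta)$ (and hence of $(P \oplus P, \theta \oplus -\theta)$) is used twice: once to ensure $j^*(\theta\oplus-\theta)$ is surjective, and once to identify its kernel with the diagonal. It is also worth noting that for forms (as opposed to linking forms) the split condition is automatic, since $L^* \in \A(A)$ is projective and any surjection onto a projective module splits; this is consistent with the remark that for forms, split metabolic and metabolic agree.
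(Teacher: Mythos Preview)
Your proof is correct and takes the same approach as the paper: both use the diagonal $j\co P\to P\oplus P$, $x\mapsto(x,x)$, as the lagrangian. The only cosmetic difference is that the paper exhibits the splitting as the retraction $(1\,\,0)\co P\oplus P\to P$ of $j$ (which does not mention $\theta^{-1}$), whereas you exhibit the equivalent section $\phi\mapsto(\theta^{-1}(\phi),0)$ of $j^*(\theta\oplus-\theta)$. Your explicit remark that non-singularity is needed for exactness of the lagrangian sequence is well taken and is implicit in the paper's setup, since lagrangians are only defined for non-singular forms there.
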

\begin{proof}The diagonal $\lmat 1\\1\rmat\co P\to P\oplus P$ is a lagrangian with splitting $(1\,\,0) \co P\oplus P\to P$.
\end{proof}

\begin{lemma}\label{lem:halfunithyp}If $s\in A$ is a half-unit and $(P,\theta)$ is an $\eps$--symmetric form over $A$ then \[(P,\theta)+(P,-\theta)=(P\oplus P,\theta\oplus-\theta)\]is hyperbolic.
\end{lemma}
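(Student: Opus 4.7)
The plan is to reduce the claim directly to Lemma \ref{lem:splitishyp1}, which states that any metabolic $\eps$--symmetric form over $A$ is already hyperbolic provided $A$ contains a half-unit. The previous lemma exhibits an explicit lagrangian for $(P\oplus P,\theta\oplus-\theta)$, namely the diagonal $\Delta\co P\hookrightarrow P\oplus P$, $x\mapsto(x,x)$, split by projection to either summand. Thus the form is (split) metabolic, and invoking Lemma \ref{lem:splitishyp1} (second part) will complete the proof once its hypothesis is verified.

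To verify that hypothesis I would set $\psi:=s(\theta\oplus-\theta)$ and check that $\theta\oplus-\theta=\psi+\eps\psi^*$. Unwinding the definition of the dual morphism together with the $A$--action on $(P\oplus P)^*$ and the $\eps$--symmetry $\theta^*=\eps\theta$ yields $\psi^*=\eps\overline{s}(\theta\oplus-\theta)$, whence
\[
\psi+\eps\psi^*=s(\theta\oplus-\theta)+\overline{s}(\theta\oplus-\theta)=(s+\overline{s})(\theta\oplus-\theta)=\theta\oplus-\theta,
\]
as required. Lemma \ref{lem:splitishyp1} then produces a splitting $k'$ of $\Delta^*(\theta\oplus-\theta)$ whose image, together with $\Delta(P)$, decomposes $(P\oplus P,\theta\oplus-\theta)$ as a direct sum of two complementary lagrangians.

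If one prefers to be explicit, an equivalent route is to write the two lagrangians down by hand. I would take $L_+:=\{(x,x):x\in P\}$ and $L_-:=\{(sx,-\overline{s}x):x\in P\}$. Commutativity of $A$ makes the cross terms $s\overline{s}$ and $\overline{s}s$ cancel, so $L_-$ is isotropic, while the identity $s+\overline{s}=1$ makes the $A$--module map $L_+\oplus L_-\to P\oplus P$ bijective with explicit inverse $(u,v)\mapsto(\overline{s}u+sv,\,u-v)$. The fact that the isotropic submodule $L_-$ is then actually a lagrangian follows by a short formal argument: $L_+^\perp=L_+$ together with $L_++L_-=P\oplus P$ gives $L_+\cap L_-^\perp=(L_++L_-)^\perp=0$, and decomposing any $x\in L_-^\perp$ as $x_++x_-\in L_+\oplus L_-$ forces $x_+\in L_+\cap L_-^\perp=0$, so $L_-^\perp\subseteq L_-$.

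There isn't really a substantive obstacle here: the content of the statement is already packaged inside Lemma \ref{lem:splitishyp1}, and the only careful bookkeeping concerns the involution, the sign $\eps$, and the $A$--action on $P^*$ when verifying $\psi^*=\eps\overline{s}\theta$. The role of the half-unit is precisely to promote the split metabolic structure of the previous lemma into an honest pair of complementary lagrangians.
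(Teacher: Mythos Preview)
Your proof is correct. Your alternative explicit route is essentially the paper's own argument: the paper writes down the two lagrangians $(1\;1)\co P\to P\oplus P$ and $(\bar s\;-s)\co P\to P\oplus P$ directly and checks complementarity via the matrix identity
\[
\left(\begin{matrix}s & 1\\\bar{s} & -1\end{matrix}\right)\left(\begin{matrix}1&1\\\bar{s}& -s\end{matrix}\right)=I,
\]
which is your $L_+,L_-$ with $s$ and $\bar s$ interchanged (harmless, since $\bar s$ is also a half-unit). Your primary route, reducing to Lemma~\ref{lem:splitishyp1} by exhibiting $\psi=s(\theta\oplus-\theta)$ with $\psi+\eps\psi^*=\theta\oplus-\theta$, is a genuinely different packaging: it avoids naming the second lagrangian at all and simply cashes in the general fact that metabolic implies hyperbolic in the presence of a half-unit. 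The advantage of the paper's approach is that it is self-contained and makes the complementary pair visible; the advantage of yours is economy, since the work has already been done in Lemma~\ref{lem:splitishyp1}. Either is perfectly adequate here.
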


\begin{proof}$(P\oplus P, \theta\oplus -\theta)$ has lagrangians \[\begin{array}{lccrcl}(&1&1&)&\co &(P,0)\to (P\oplus P,\theta\oplus -\theta),\\(&\bar{s}&-s&)&\co &(P,0)\to (P\oplus P,\theta\oplus -\theta).\end{array}\]They are complementary as \[\left(\begin{matrix}s & 1\\\bar{s} & -1\end{matrix}\right)\left(\begin{matrix}1&1\\\bar{s}& -s\end{matrix}\right)=\left(\begin{matrix}1&1\\\bar{s}& -s\end{matrix}\right)\left(\begin{matrix}s & 1\\\bar{s} & -1\end{matrix}\right)=I.\]\end{proof}

The preceding lemmas hold completely analogously for linking forms. This justifies the following definitions:

\begin{definition}Suppose $(A,S)$ defines a localisation. The monoid constructions\[\begin{array}{rcl}
W^\eps(A)&=&\NN^\eps(A)/\{\text{metabolic forms}\}\\
W^\eps(A,S)&=&\NN^\eps(A,S)/\{\text{metabolic linking forms}\}
\end{array}\]are abelian groups called the \emph{$\eps$--symmetric Witt group} of $A$ and of $(A,S)$ respectively. The monoid construction\[\begin{array}{rcl}
DW^\eps(A,S)&=&\NN^\eps(A,S)/\{\text{hyperbolic linking forms}\}
\end{array}\] is an abelian group called the \emph{$\eps$--symmetric double Witt group} of $(A,S)$.
\end{definition}

\begin{remark}\label{clm:DWvsW}Of course we could have additionally defined a double Witt group of forms over $A$ in a similar fashion. But, by Lemma \ref{lem:splitishyp1}, the forgetful functor\[\NN^\eps(A)/\{\text{hyperbolic forms}\}\xrightarrow{\cong} W^\eps(A)\]is an isomorphism.
\end{remark}

\begin{remark}The first appearance of a double Witt group of a localisation $(A,S)$ is in Stoltzfus \cite[3.3]{MR521738}. Stoltzfus defines the `hyperbolic algebraic concordance groups of $\eps$--symmetric isometric structures' over a Dedekind domain $A$, which he denotes $CH^\eps(A)$ when the underlying modules are projective and denotes $CH^\eps(K/A)$ (for $K$ the fraction field of $A$) when the underlying modules are torsion. These groups are defined using a split metabolic definition but Stoltzfus shows \cite[3.2]{MR521738} that the resulting groups are double Witt groups in our sense. Indeed, for the projective case, Stoltzfus' group $CH^\eps(A)$ is isomorphic to the double Witt group $DW^\eps(A[z,z^{-1},(1-z)^{-1}],P)$ where $P=\{p(z)\in A[z,z^{-1}]\,|\,\text{$p(1)\in A$ is a unit}\}$.
\end{remark}

\subsection*{Calculating double Witt groups of linking forms}

We will assume for the rest of this section that $A$ is a Dedekind domain.

We now wish to calculate some double Witt groups of linking forms and analyse their relationship to Witt groups of linking forms via the forgetful map. Using Corollary \ref{cor:forget} and some standard Witt group calculations from the literature, we are able to give an answer to the following question:

\begin{question}\label{q:DWkernel}What is the kernel of the surjective forgetful morphism\[DW^\eps(A,S)\twoheadrightarrow W^\eps(A,S)\text{?}\]\end{question}

An obvious consequence of Proposition \ref{prop:dedekinddecomp} is that when $A$ is a Dedekind domain and $\p\neq\overline{\p}$ are prime ideals, then if $(T,\lambda)$ is a non-singular linking form over $(A,A\sm\{0\})$, the non-singular linking form $(T_\p\oplus T_{\overline{\p}},\lambda|_{T_\p\oplus T_{\overline{\p}}})$ is hyperbolic. The decomposition in Proposition \ref{prop:dedekinddecomp} then becomes a decomposition of Witt groups.

\begin{proposition}\label{prop:dedekinddecomp2}For $A$ a Dedekind domain, there are natural isomorphisms of abelian groups\begin{eqnarray*}W^\eps(A,A\sm\{0\})&\cong&\bigoplus_{\p=\overline{\p}}W^\eps(A,\p^\infty),\\
DW^\eps(A,A\sm\{0\})&\cong&\bigoplus_{\p=\overline{\p}}DW^\eps(A,\p^\infty).\end{eqnarray*}
\end{proposition}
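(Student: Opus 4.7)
The plan is to build on Proposition \ref{prop:dedekinddecomp}, which already provides a natural monoid-level decomposition, and to show that the ``cross terms'' associated to primes with $\p\neq\overline{\p}$ are hyperbolic, hence vanish in both the Witt and double Witt groups.

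First I would verify that for $\p\neq\overline{\p}$ the summand $(T_\p\oplus T_{\overline{\p}},\lambda|_{T_\p\oplus T_{\overline{\p}}})$ is hyperbolic, with complementary lagrangians $T_\p$ and $T_{\overline{\p}}$. The key point is that $T_\p^\wedge$ is $\overline{\p}$-primary (since the $A$-action on the torsion dual is twisted by the involution), so the adjoint $\lambda$ restricted to $T_\p$ must land in $T_{\overline\p}^\wedge$. Consequently $\lambda(x,y)=0$ for all $x,y\in T_\p$, the sequence $0\to T_\p\to T_\p\oplus T_{\overline\p}\to T_\p^\wedge\to 0$ is (split) exact, and $T_\p$ is a lagrangian; similarly for $T_{\overline\p}$. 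They are obviously complementary as direct summands, so the form is hyperbolic. In particular it represents $0$ in both $W^\eps(A,A\sm\{0\})$ and $DW^\eps(A,A\sm\{0\})$.

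Next I would define the forward map by sending $[(T,\lambda)]$ to $\bigoplus_{\p=\overline{\p}}[(T_\p,\lambda_\p)]$, and the inverse by direct sum. To check that the forward map is well-defined, I need to verify that primary projection preserves being (hyperbolic or) metabolic. This follows from naturality: if $L\hookrightarrow T$ is a lagrangian for $(T,\lambda)$, then $L=\bigoplus_\q L_\q$ and the exact sequence $0\to L\to T\to L^\wedge\to 0$ decomposes into primary parts; since $(L^\wedge)_\p=(L_{\overline\p})^\wedge$, taking the $\p$-primary part with $\p=\overline{\p}$ yields precisely $0\to L_\p\to T_\p\to L_\p^\wedge\to 0$, a lagrangian for $(T_\p,\lambda_\p)$. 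The same argument applied to a pair of complementary lagrangians $L_\pm$ gives complementary lagrangians $(L_\pm)_\p$ in $(T_\p,\lambda_\p)$, showing hyperbolic-to-hyperbolic. This gives well-defined group morphisms for both $W^\eps$ and $DW^\eps$.

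Finally I would check both compositions are the identity. ``Sum then project'' is the identity on each summand because the $\p$-primary part of $\bigoplus_{\q=\overline\q}(T_\q,\lambda_\q)$ is $(T_\p,\lambda_\p)$. For ``project then sum,'' Proposition \ref{prop:dedekinddecomp} together with the first step shows
\[(T,\lambda)\cong\bigoplus_{\p=\overline\p}(T_\p,\lambda_\p)\,\oplus\,\bigoplus_{\p\neq\overline\p}\bigl(T_\p\oplus T_{\overline\p},\lambda|_{T_\p\oplus T_{\overline\p}}\bigr),\]
with the second (infinite but finitely nontrivial) direct sum consisting of hyperbolic pieces, so it is trivial in $DW^\eps$ (and in $W^\eps$). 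Naturality is inherited from naturality of the monoid-level decomposition in Proposition \ref{prop:dedekinddecomp}. The main (and essentially only) technical point is the first step, establishing that the off-diagonal pieces are genuinely hyperbolic, not merely metabolic; this requires the correct identification of $T_\p^\wedge$ as $\overline\p$-primary so that $T_\p$ has trivial self-pairing.
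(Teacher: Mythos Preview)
Your proposal is correct and follows exactly the approach the paper takes: the paper states (in the paragraph immediately preceding the proposition) that the $\p\neq\overline\p$ pieces are hyperbolic and that Proposition~\ref{prop:dedekinddecomp} then descends to Witt and double Witt groups, without giving further details. Your argument simply fills in those details---the identification of $T_\p$ and $T_{\overline\p}$ as complementary lagrangians, and the check that primary projection respects (split) lagrangians---so there is nothing to compare.
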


In light of this, we will now specialise to $\p$-primary Witt groups for $\p=\overline{\p}$. First we mention a standard technique (cf.\ Ranicki \cite[4.2.1]{MR620795}) for manipulating Witt groups of linking forms over $(A,\p^\infty)$ called \emph{devissage}.

\begin{proposition}[Devissage]\label{prop:devissage}Let $A$ be a local ring with involution invariant maximal ideal $\p$. Suppose $(T,\lambda)$ is a non-singular $\eps$--symmetric linking form over $(A,\p^\infty)$ and that $T=T_l$ for some $l>0$, then
\[(T,\lambda)\sim\left\{\begin{array}{lcl}0&&\text{$l$ even}\\ (T',\lambda')&&\text{$l$ odd}\end{array}\right. \quad\in W^\eps(A,\p^\infty),\]where $(T',\lambda')$ is such that $p T'=0$.
\end{proposition}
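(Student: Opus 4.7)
The plan is to combine the classical sublagrangian reduction principle with an explicit choice of sublagrangian inside $T=T_l$. Sublagrangian reduction asserts that for a non-singular $\eps$--symmetric linking form $(T,\lambda)$ over $(A,\p^\infty)$ and any submodule $L\subseteq L^\perp$ in $\H(A,\p^\infty)$, the pairing $\lambda$ descends to a well-defined non-singular form $\overline{\lambda}$ on $T':=L^\perp/L$ (well-definedness from $\lambda(L,L^\perp)=\lambda(L^\perp,L)=0$), and $(T,\lambda)\sim(T',\overline\lambda)\in W^\eps(A,\p^\infty)$. The equivalence is witnessed by the graph $N=\{(x,[x])\,|\,x\in L^\perp\}\subseteq T\oplus T'$, which is isotropic for $(T,\lambda)\oplus(T',-\overline\lambda)$ by direct calculation; the identity $N=N^\perp$ and the exactness of $0\to N\to T\oplus T'\to N^\wedge\to 0$ then follow from composition-length arithmetic over the local PID $A$ together with the standard identity $(L^\perp)^\perp=L$.

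With this in hand, I would analyse the chain of candidates $L=p^rT$ for $r\leq l$. Sesquilinearity gives, for $x=p^rx',y=p^ry'\in L$,
\[\lambda(x,y)=\overline{p^r}p^r\lambda(x',y'),\]
which is a unit multiple of $p^{2r}\lambda(x',y')$; since $p^l\lambda(x',y')=\lambda(x',p^ly')=0$, this vanishes whenever $2r\geq l$, so $L\subseteq L^\perp$. Using the isomorphism $\lambda\co T\xrightarrow{\cong}T^\wedge$ together with the identification of the restriction map $T^\wedge\to L^\wedge$ as the natural surjection $(A/p^lA)^n\twoheadrightarrow(A/p^{l-r}A)^n$ with kernel $p^{l-r}T^\wedge$, I can read off $L^\perp=p^{l-r}T$.

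The proposition then splits into two cases. For $l=2k$ even, take $r=k$: then $L=p^kT=p^{l-r}T=L^\perp$, so $T'=0$ and sublagrangian reduction gives $(T,\lambda)\sim 0$. For $l=2k+1$ odd, take $r=k+1$: then $L=p^{k+1}T\subsetneq p^kT=L^\perp$ and $T':=p^kT/p^{k+1}T$ is manifestly annihilated by $p$, so the reduction yields $(T,\lambda)\sim(T',\overline\lambda)$ as required.

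The main obstacle is the sublagrangian reduction principle itself, in particular the verification that $N=N^\perp$ and the exactness of the lagrangian sequence for $N$, both required for $N$ to be a lagrangian in the metabolic sense used to define $W^\eps(A,\p^\infty)$. Over the DVR $A$ these reduce to routine length bookkeeping, but the precise application of $(L^\perp)^\perp=L$ inside the doubled form $(T,\lambda)\oplus(T',-\overline\lambda)$ is the step that warrants the most careful attention.
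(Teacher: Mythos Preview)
Your proposal is correct and follows essentially the same argument as the paper: both choose the sublagrangian $L=p^{r}T$ (with $2r\geq l$), identify $L^\perp$ as $p^{l-r}T=K_r(T)$, invoke the standard sublagrangian reduction via the graph lagrangian $x\mapsto(x,[x])$ in $(T,\lambda)\oplus(L^\perp/L,-\overline\lambda)$, and then read off the two cases. The only cosmetic differences are your indexing in the odd case ($l=2k+1$, $r=k+1$ versus the paper's $l=2k-1$, $r=k$) and your computation of $L^\perp$ via the module structure of $T^\wedge$ rather than the paper's direct use of non-degeneracy of $\lambda$.
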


\begin{proof}Choose a generator $p=u_p\overline{p}$ of $\p$ for some unit $u_p$. For $2k\geq l$, define a submodule $L=p^{k} T\subset T$. It is clear that $K_m\subseteq (p^mT)^\perp$. We claim moreover that $L^\perp = K_k(T)$. To see this, suppose $x\in (p^mT)^\perp$. Then $0=\lambda(x,p^my)=\lambda(\overline{p}^mx,y)$ for all $y\in T$. But $\lambda$ is injective, so indeed $x\in\ker(\overline{p}^m\co T\to T)=K_m$. 

Now we have\[L\subseteq L^\perp\subseteq p^{l-k}T\] and it is standard for linking forms (or easily checked) that $\lambda$ restricts to a non-singular $\eps$--symmetric linking form on the quotient $(L^\perp/L,\lambda)$ when $L\subseteq L^\perp$. There is a lagrangian\[L^\perp\hookrightarrow (T\oplus (L^\perp/L),\lambda\oplus -\lambda);\qquad x\mapsto (x,[x]),\]so that $(T,\lambda)\sim (L/L^\perp,\lambda)\in W^\eps(A,\p^\infty)$. In particular, when $l=2k$ we have $L^\perp/L$=0, and when $l=2k-1$ we have $p(L^\perp/L)\subset p(p^{k-1}T/L)=0$ so the result follows.
\end{proof}

\begin{proposition}\label{prop:residuefield}Let $A$ be a local ring with involution invariant maximal ideal $\p=(p)$ such that $u_p\overline{p}=p$ for some unit $u_p=\overline{u_p}^{-1}$. Then there is an isomorphism of abelian groups\[W^{u_p\eps}(A/\p)\cong W^\eps(A,\p^\infty).\]
\end{proposition}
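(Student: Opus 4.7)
The plan is to directly construct the isomorphism $\phi\colon W^{u_p\eps}(A/\p)\to W^\eps(A,\p^\infty)$ by sending a form over the residue field to a linking form supported on the $p$-torsion, then use devissage to handle surjectivity and the correspondence of lagrangians to handle injectivity.

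\emph{Construction of $\phi$.} Given a non-singular $u_p\eps$-symmetric form $(V,b)$ over the residue field $A/\p$, I would regard $V$ as an $A$-module on which $p$ acts as zero. The $A$-module monomorphism $A/\p \hookrightarrow S^{-1}A/A$ sending $\bar a\mapsto a/p$ identifies $A/\p$ with the $p$-torsion subgroup of $S^{-1}A/A$, which in turn yields a canonical identification $V^\wedge \cong V^*:=\Hom_{A/\p}(V,A/\p)$. Define $\lambda(x,y):=b(x,y)/p$. The relation $\overline{p}=u_p^{-1}p$ yields $\overline{\lambda(y,x)}=\overline{b(y,x)}/\overline{p}=u_p\overline{b(y,x)}/p$, so $\eps$-symmetry of $\lambda$ is equivalent to $u_p\eps$-symmetry of $b$; non-singularity of one is equivalent to the other. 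The essential point is that since $pV=0$ every $A$-submodule $L\subseteq V$ is automatically an $A/\p$-subspace, and under $L^\wedge\cong L^*$ the lagrangian exact sequence $0\to L\to V\to L^\wedge\to 0$ for $(V,\lambda)$ coincides with $0\to L\to V\to L^*\to 0$ for $(V,b)$. Thus $\phi$ is a monoid map that sends metabolic forms to metabolic linking forms \emph{and vice versa}, hence descends to a homomorphism of Witt groups.

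\emph{Surjectivity.} For a representative $(T,\lambda)$, use Proposition \ref{prop:nonnatural} to decompose $(T,\lambda)\cong\bigoplus_{l=1}^\infty(T_l,\lambda_l)$ non-canonically, then apply devissage (Proposition \ref{prop:devissage}) summand-by-summand: for $l$ even $(T_l,\lambda_l)\sim 0$ in $W^\eps(A,\p^\infty)$, and for $l$ odd $(T_l,\lambda_l)\sim(T'_l,\lambda'_l)$ with $pT'_l=0$. Summing, $(T,\lambda)$ is Witt-equivalent to $\bigoplus_{l\text{ odd}}(T'_l,\lambda'_l)$, which lies in the image of $\phi$.

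\emph{Injectivity.} Suppose $\phi[V,b]=0$, so there exist metabolic linking forms $(N_1,\nu_1),(N_2,\nu_2)$ with $(V,\lambda)\oplus(N_1,\nu_1)\cong(N_2,\nu_2)$. The surjectivity procedure reduces each $(N_i,\nu_i)$ within the relation to a linking form annihilated by $p$, using the observation from the proof of Proposition \ref{prop:devissage} that the operation $(T,\mu)\leadsto (L^\perp/L,\bar\mu)$ sends a metabolic linking form with lagrangian $L_0$ to a metabolic one with lagrangian $(L_0\cap L^\perp)/(L_0\cap L)$. Once all terms lie in the submonoid of $p$-annihilated linking forms, the bijection from the construction step transports the stable isomorphism back to a stable isomorphism $(V,b)\oplus(N_1,b_1)\cong(N_2,b_2)$ with $(N_i,b_i)$ metabolic forms, showing $[V,b]=0$.

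The main obstacle is the bookkeeping in the injectivity step: one must check that the devissage reduction, when applied to the metabolic stabilising summands $(N_i,\nu_i)$, can be performed \emph{through metabolic intermediate forms}, so that the reduction happens inside the submonoid of linking forms annihilated by $p$ rather than merely at the level of the Witt group. This is a concrete consequence of the explicit formula for the reduction in Proposition \ref{prop:devissage}, but it is the step that requires the most care.
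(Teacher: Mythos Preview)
Your construction of $\phi$ and the symmetry check are exactly what the paper does. The paper's argument is terser: it notes that the equivalence between $p$-annihilated $A$-modules and $A/\p$-vector spaces gives a monoid isomorphism $\NN^{u_p\eps}(A/\p)\cong\NN^\eps(A,\p)$ (the submonoid of linking forms with $pT=0$) carrying metabolics to metabolics \emph{in both directions}, and hence descends directly to an isomorphism of Witt groups. The passage from $p$-annihilated linking forms to all of $W^\eps(A,\p^\infty)$ is left to the preceding devissage proposition, which you invoke explicitly for surjectivity.

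Your injectivity argument is more circuitous than it needs to be. Because every $A$-submodule of a $p$-annihilated module is automatically an $A/\p$-subspace, the lagrangian correspondence you set up already shows that $(V,\lambda)$ is metabolic as a linking form if and only if $(V,b)$ is metabolic as a form over $A/\p$. So once one has the standard fact that a class vanishes in $W^\eps(A,\p^\infty)$ if and only if any representative is itself metabolic (the sublagrangian-reduction lemma for non-singular linking forms), injectivity is immediate: there is no need to push the stabilising metabolics $N_i$ through devissage. Your route does work, but note that the claim you flag as delicate---that $(L_0\cap L^\perp)/(L_0\cap L)$ is a lagrangian of $(L^\perp/L,\bar\mu)$---is precisely that same sublagrangian lemma in disguise, resting on the identity $(L_0\cap L^\perp)^\perp=L_0+L$ for a lagrangian $L_0$ and an isotropic $L$ in a non-singular linking form. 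So both arguments ultimately hinge on the same ingredient; the paper's packaging just avoids unpacking it.
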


\begin{proof}There is an equivalence of categories between the category of finitely generated $A$--modules $T$ such that $pT=0$ and the category of finite dimensional vector spaces over $A/\p$. Under which, for any f.g.\ $\p$-torsion $A$--module $T$, we observe that there is an isomorphism of $A$--modules \[\Hom_{A/\p}(T,A/\p)\xrightarrow{\cong}\Hom_A(T,\p^{-1}A/A);\qquad f\mapsto (x\mapsto f(x)/p).\]But consider that this interacts with symmetry in the following way. If $(T,\theta)$ is a $(u_p\eps)$--symmetric form over $A/\p$, then the corresponding $(T,\lambda)$ has \[\begin{array}{rcl}\eps\lambda^\wedge(x)(y)&=&\eps\overline{\lambda(y,x)}\\&=&\eps\overline{\theta(y,x)}/\overline{p}\\&=&u_p\eps\theta^*(x,y)/p=\lambda(x)(y).\end{array}\]This induces an isomorphism of commutative monoids $\NN^{u_p\eps}(A/\p)\cong \NN^\eps(A,\p)$. As the submonoids of metabolic forms are preserved under the isomorphism $\NN^{u_p\eps}(A/\p)\cong \NN^\eps(A,\p)$, there is an isomorphism of Witt groups after the respective monoid constructions.
\end{proof}

Combining Propositions \ref{prop:devissage} and \ref{prop:residuefield}, one obtains the following theorem first proved in the case that $1/2\in A$ by Karoubi \cite{MR0384894} and first proved generally by Ranicki \cite[4.2.1]{MR620795} (even without the assumption of a half-unit).

\begin{theorem}\label{thm:karoubi}Let $A$ be a Dedekind domain. Then there is an isomorphism of abelian groups\[\sigma^W\co W^\eps(A,A\sm\{0\})\xrightarrow{\cong} \bigoplus_{\p=\overline{\p}}W^{u_p\eps}(A/\p).\]The image $\sigma^W(T,\lambda)$ of a non-singular $\eps$--symmetric linking form is called the \emph{Witt multisignature}.
\end{theorem}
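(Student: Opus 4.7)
The plan is to concatenate the three propositions already established. Proposition \ref{prop:dedekinddecomp2} splits $W^\eps(A, A\sm\{0\})$ as the direct sum $\bigoplus_{\p = \overline{\p}} W^\eps(A, \p^\infty)$ of its primary components, so it suffices to produce for each involution-invariant prime $\p$ a natural isomorphism $W^\eps(A, \p^\infty) \cong W^{u_p\eps}(A/\p)$, and then assemble these over $\p$.

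Since $A$ is Dedekind, the equivalence of categories between $\H(A, \p^\infty)$ and $\H(A_\p, \p^\infty)$ means I may replace $A$ with its localisation $A_\p$ without altering the Witt group, so I assume $A$ is a DVR with uniformiser $p$ satisfying $p = u_p\overline{p}$. Proposition \ref{prop:residuefield} already furnishes an explicit homomorphism $\Phi \co W^{u_p\eps}(A/\p) \to W^\eps(A, \p^\infty)$ sending a form $(V, \theta)$ to the $\p$-torsion linking form with $\lambda(x)(y) = \theta(x)(y)/p$. This $\Phi$ is my candidate inverse to $\sigma^W$ on each primary summand.

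For the forward direction, given $[T, \lambda] \in W^\eps(A, \p^\infty)$, first invoke Proposition \ref{prop:nonnatural} to split $(T, \lambda) \cong \bigoplus_{l \geq 1}(T_l, \lambda_l)$ with each underlying module $(T_l)_l$-typical. Then apply devissage (Proposition \ref{prop:devissage}) summand by summand: the even-$l$ pieces are Witt-trivial, while each odd-$l$ piece is Witt-equivalent to a linking form $(T'_l, \lambda'_l)$ with $pT'_l = 0$. Summing over odd $l$ and pulling back along the isomorphism of Proposition \ref{prop:residuefield} yields a class in $W^{u_p\eps}(A/\p)$, which I declare to be the image of $[T, \lambda]$ under $\sigma^W$.

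The principal obstacle is well-definedness of $\sigma^W$, because it depends on two non-canonical choices: the splitting in Proposition \ref{prop:nonnatural}, and the sublagrangian $L = p^k T_l$ used in each devissage step. Two such splittings produce families $\{(T_l, \lambda_l)\}$ that are isomorphic after summation, so the output classes over $A/\p$ agree. For the devissage choice, any two admissible $L$ yield $(L^\perp/L, \lambda)$ that are Witt-equivalent by the standard sublagrangian comparison argument (if $L \subseteq L' \subseteq (L')^\perp \subseteq L^\perp$ then $L^\perp/L$ and $(L')^\perp/L'$ differ only by a metabolic summand). Once well-definedness is secured, the identity $\sigma^W \circ \Phi = \id$ is immediate since devissage acts trivially on linking forms with $pT = 0$, and $\Phi \circ \sigma^W = \id$ follows because devissage modifies the representing linking form only by metabolic forms.
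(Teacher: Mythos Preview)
Your proposal is correct and follows essentially the same route as the paper: the paper's entire proof reads ``Combining Propositions \ref{prop:devissage} and \ref{prop:residuefield}, one obtains the following theorem\ldots'', together with the primary decomposition of Proposition \ref{prop:dedekinddecomp2}. You have simply fleshed out what ``combining'' means---in particular, you correctly note that devissage as stated requires $T=T_l$, so one must first invoke Proposition \ref{prop:nonnatural} to split into pieces before applying it, and you address the well-definedness issues that the paper leaves implicit.
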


We will now prove that there is \emph{never} devissage in the double Witt groups over Dedekind domains. In other words, the double Witt relations will respect the decomposition of Theorem \ref{thm:MDT2}.

\begin{definition}Let $A$ is a Dedekind domain and $(T,\lambda)$ be a non-singular $\eps$--symmetric linking form over $(A,A\sm\{0\})$. For each involution invariant prime ideal $\p\subset A$, choose a uniformiser $p\in A$ such that $p=u_p\overline{p}\in A_\p$. Then for each $l>0$ we define the $(\p,l)$-signature of $(T,\lambda)$ to be the Witt class \[\sigma_{\p,l}(T,\lambda):=[(\Delta_l(T_\p),b_l(\lambda_\p))]\in W^{v_p}(A/\p),\qquad v_p:=\left\{\begin{array}{lll}\eps&&\text{$l$ even,}\\u_p\eps&&\text{$l$ odd.}\end{array}\right.\]
\end{definition}

\begin{theorem}\label{thm:multisignature}Let $A$ be a Dedekind domain conatining a half-unit $s$ and for each involution invariant prime ideal $\p\subset A$, choose a uniformiser $p\in A$ such that $p=u_p\overline{p}\in A_\p$. Then there are isomorphisms of abelian groups\[\begin{array}{rcl}DW^\eps(A,A\sm\{0\})&\xrightarrow{\cong}& \bigoplus_{\p=\overline{\p}} DW^\eps(A,\p^\infty)\\
&\xrightarrow{\cong}& \bigoplus_{\p=\overline{\p}}\bigoplus_{l=1}^\infty W^{v_p}(A/\p),\end{array}  \qquad v_p:=\left\{\begin{array}{lll}\eps&&\text{$l$ even,}\\u_p\eps&&\text{$l$ odd.}\end{array}\right.\]The composite isomorphism is given by the collection of $(\p,l)$-signatures $\sigma_{\p,l}(T,\lambda)$ and is called the \emph{double Witt multisignature} \[\sigma^{DW}\co DW^\eps(A,A\sm\{0\})\xrightarrow{\cong}\bigoplus_{\p=\overline{\p}}\bigoplus_{l=1}^\infty W^{v_p}(A/\p).\]
\end{theorem}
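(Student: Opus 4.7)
The plan is to establish the two displayed isomorphisms in turn, then identify their composite with the collection of $(\p,l)$-signatures.

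The outer decomposition $DW^\eps(A,A\sm\{0\})\cong\bigoplus_{\p=\overline{\p}}DW^\eps(A,\p^\infty)$ is precisely the double Witt statement of Proposition \ref{prop:dedekinddecomp2}, and holds because the primary decomposition in Proposition \ref{prop:dedekinddecomp} sends each non-self-dual contribution $(T_\p\oplus T_{\overline{\p}},\lambda|\cdots)$ to a hyperbolic linking form: $\lambda$ restricted to $T_\p$ lands in $T_{\overline{\p}}^\wedge$, so vanishes on each of $T_\p$ and $T_{\overline{\p}}$ individually, making them complementary lagrangians. The standard equivalence of categories $\H(A,\p^\infty)\simeq\H(A_\p,\p^\infty)$ then identifies $DW^\eps(A,\p^\infty)\cong DW^\eps(A_\p,\p^\infty)$, placing us in the setting of Theorem \ref{thm:MDT2}.

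The core is therefore the second isomorphism $DW^\eps(A_\p,\p^\infty)\cong\bigoplus_{l\geq 1} W^{v_p}(A/\p)$. Theorem \ref{thm:MDT2} already provides a natural monoid isomorphism $\NN^\eps(A_\p,\p^\infty)\cong\bigoplus_l\NN^{v_p}(A/\p)$ given by $(T,\lambda)\mapsto\bigoplus_l(\Delta_l(T),b_l(\lambda))$, and post-composing with the Witt projections yields the candidate multisignature $\sigma^{DW}$. Two things remain: $\sigma^{DW}$ must kill every hyperbolic linking form, and the induced map on $DW^\eps(A_\p,\p^\infty)$ must be injective.

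For well-definedness, I would exploit the naturality of the auxiliary construction. If $(T,\lambda)$ is hyperbolic with complementary lagrangians $j_\pm\co L_\pm\hookrightarrow T$, then the kernel filtration $K_m$ respects direct sums, so $\Delta_l(T)=\Delta_l(L_+)\oplus\Delta_l(L_-)$. Since $\lambda$ vanishes identically on each $L_\pm$, the induced pairing $b_l(\lambda)$ is off-diagonal with respect to this splitting and each $\Delta_l(L_\pm)$ is isotropic. A short computation using the non-singularity of $b_l(\lambda)$ shows $\Delta_l(L_+)^\perp=\Delta_l(L_+)$, so each auxiliary form is hyperbolic over $A/\p$ and represents zero in $W^{v_p}(A/\p)$.

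The main obstacle is injectivity. Suppose every $(\p,l)$-signature of $(T,\lambda)$ vanishes; then since $A/\p$ is a field inheriting a half-unit from $A$, each auxiliary form $(\Delta_l(T),b_l(\lambda))$ is stably hyperbolic, and hyperbolic forms over $A/\p$ are classified up to isomorphism by their rank. For each $l$ I would construct a standard hyperbolic linking form $(H_l,\mu_l)\in\NN^\eps_{\p,l}$ --- built on a free $A_\p/p^lA_\p$-module of appropriate rank --- so that the auxiliary form of $(T,\lambda)\oplus\bigoplus_l(H_l,\mu_l)$ is a standard hyperbolic over $A/\p$ in every degree $l$. The monoid isomorphism $\NN^\eps_{\p,l}\cong\NN^{v_p}(A/\p)$ of Theorem \ref{thm:MDT2} then forces $(T,\lambda)\oplus\bigoplus_l(H_l,\mu_l)$ to be isomorphic to a hyperbolic linking form over $(A_\p,\p^\infty)$, so $(T,\lambda)$ vanishes in $DW^\eps(A_\p,\p^\infty)$. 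Tracing through the reductions above confirms that the composite isomorphism is the multisignature $\sigma^{DW}$, as claimed.
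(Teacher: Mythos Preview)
Your argument is correct and the outer decomposition and well-definedness steps match the paper exactly. The genuine difference is in how you establish injectivity. The paper proves directly that if every $(\Delta_l(T),b_l(\lambda))$ is hyperbolic then $(T,\lambda)$ itself is hyperbolic, by lifting a lagrangian of the auxiliary form to a split injection $j\co L\hookrightarrow T_l$ and then running an induction on the $p$-adic valuation of the diagonal block: writing $\lambda_l=\lmat f&1\\\eps&h\rmat$ with $f$ divisible by $p$, one modifies $j$ using the half-unit so that $f$ becomes divisible by $p^{2k}$, eventually forcing $f=0$; doing this to both lagrangians independently yields a hyperbolic splitting of $(T_l,\lambda_l)$. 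You bypass this computation entirely by exploiting the full monoid isomorphism $\NN^\eps(A_\p,\p^\infty)\cong\bigoplus_l\NN^{v_p}(A/\p)$ of Theorem~\ref{thm:MDT2}: once the auxiliary forms are (stably) hyperbolic you simply write down a standard hyperbolic linking form in each $\NN^\eps_{\p,l}$ of the correct rank, observe that hyperbolic forms over the field $A/\p$ are determined by rank, and conclude the two linking forms are isomorphic because their images under the monoid isomorphism agree. Your route is shorter and more conceptual --- it shows the paper's lifting argument is in fact already implicit in Theorem~\ref{thm:MDT2} --- whereas the paper's argument is more constructive, explicitly producing the complementary lagrangians in $(T,\lambda)$ rather than inferring their existence from a classification. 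One minor remark: since $A/\p$ is a field with a half-unit, Witt-trivial already implies hyperbolic (via Witt cancellation and Lemma~\ref{lem:splitishyp1}), so your stabilisation by the $(H_l,\mu_l)$ is not strictly necessary; you could conclude directly that $(T,\lambda)$ is hyperbolic on the nose.
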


\begin{proof}The proof is given for the most part by our decompositions so far, the Main Decomposition Theorem and Theorem \ref{thm:MDT2}. The final isomorphism in the statement of the theorem uses the fact that $DW^{v_p}(A/\p)\cong W^{v_p}(A/\p)$ as $A/\p$ is a field.

The only thing we must still check is that a non-singular linking form $(T,\lambda)$ over $(A,\p^\infty)$ is hyperbolic if and only if $(\Delta_l(T),b_l(\lambda))$ is hyperbolic for each $l>0$. The proof of this is essentially given in Levine \cite[1.5]{MR1004605}.

Clearly if $T\cong L_+\oplus L_-$ for lagrangians $L_\pm$ then $\Delta_l(T)\cong\Delta_l(L_+)\oplus\Delta_l(L_-)$ and $\Delta_l(L_\pm)$ are lagrangians for $(\Delta_l(T),b_l(\lambda))$ for all $l>0$.

For the converse, suppose $(T,\lambda)$ is a non-singular linking form over $(A,\p^\infty)$ and choose a decomposition $(T,\lambda)\cong\bigoplus_{l>0}(T_l,\lambda_l)$ as in Proposition \ref{prop:nonnatural}. We will first check that if $(\Delta_l(T),b_l(\lambda))$ is split metabolic for some $l>0$ then the non-singular form $(T_l,\lambda_l)\in \NN^\eps_{\p,l}$ is split metabolic. By Proposition \ref{prop:nonnatural}, this will be enough to conclude that $(T,\lambda)$ is split metabolic.

As in the proof of Proposition \ref{prop:nonnatural} we prefer to think of $(T_l,\lambda_l)$ not as a linking form, but equivalently as a form over the ring $A/(p)^l$. As in the proof of Theorem \ref{thm:MDT2}, $(\Delta_l(T),b_l(\lambda))\cong(\Delta_l(T_l),b_l(\lambda_l))$ and we may lift a split lagrangian of $(\Delta_l(T),b_l(\lambda))$ to a split injection $j\co L\hookrightarrow T_l$ (not yet a lagrangian necessarily!). So there is an isomorphism of $A/(p)^l$-modules $L\oplus T_l/L\cong T_l$, under which identification we may write the linking form $\lambda_l=\lambda|_{T_l}$ as \[\left(\begin{matrix}f&g\\\eps g^*&h\end{matrix}\right)\co L\oplus (T_l/L)\to L^*\oplus (T_l/L)^*\cong (L\oplus (T_l/L))^*\](with $-^*=\Hom_{A/(p)^l}(-,A/(p)^l)$), for some morphisms $h=\eps h^*$ and $f=\eps f^*$, where $f$ is divisible by $p$, and $g$ must be an isomorphism (by non-singularity of $(T_l,\lambda_l)$). We assume without loss of generality that $T_l/L=L^*$ and that $g=1$ as we may modify the submodule $i\co T_l/L\hookrightarrow T_l$ to $ig^{-1}\co L^*\hookrightarrow T_l$. 

We now improve our split injection $j$ to the inclusion of a split lagrangian for $(T_l,\lambda_l)$. To do this we will use the fact that $f$ is divisible by $p$ as the base case of an induction on $l>k\geq 1$. Assume that $f=p^k\Phi$ for some $A$--module morphism $\Phi\co L\to L^*$ and take $s\in A$ such that $s+\overline{s}=1\in A/(p)^l$. We use this to modify the inclusion of the submodule $j\co L\hookrightarrow T_l$ to the split injection \[\left(j\,\,\, -i(\eps sp^k\Phi)^*\right)\co L\oplus L\hookrightarrow T_l.\]This new lagrangian submodule is still complementary to $i\co L^*\hookrightarrow T_l$ and the modification does not affect $h$ in our matrix. However, $f$ is modified to
\begin{eqnarray*}
&&f-(p^ks\Phi+\eps(p^ks\Phi)^*)+(p^{k}s\Phi)h(p^ks\Phi)^*\\
&=&f-(sf+\eps \overline{s}f^*)+p^{k}\overline{p}^ks\overline{s}\Phi h\Phi^*\\
&=&u_p^kp^{2k}s\overline{s}\Phi h\Phi^*,
\end{eqnarray*}
where we have used $\overline{p}^k=u_pp^k$. So the modification of $f$ is divisible by $p^{2k}$. By induction we may assume $f$ is divisible by $p^l$ and $j\co L\hookrightarrow T_l$ is a split lagrangian. Hence $(T_l,\lambda_l)$ is split metabolic as we claimed.

To complete the proof we must now assume $(\Delta_l(T),b_l(\lambda))$ admits a pair of complementary split lagrangians for each $l>0$ and conclude that $(T,\lambda)$ is hyperbolic. But for a fixed $l>0$ we may use the procedure above on each split lagrangian of $(\Delta_l(T),b_l(\lambda))$ to define a split lagrangian for $(T_l,\lambda_l)$. Moreover note that the induction stage of the above construction can be performed on each lagrangian \emph{independent of the other} (this was the observation that the inductive modification did not affect the $h$ morphism in the matrix). So we conclude that $(T_l,\lambda_l)$ is hyperbolic for each $l>0$ and hence $(T,\lambda)$ is hyperbolic.
\end{proof}

\begin{corollary}\label{cor:forget}If $A$ is a Dedekind domain containing a half-unit then the forgetful functor is given by \[DW^\eps(A,A\sm\{0\})\to W^\eps(A,A\sm\{0\});\qquad(T,\lambda)\mapsto \bigoplus_{\p=\overline{\p}}\bigoplus_{l \text{ odd}}\sigma_{\p,l}(T,\lambda).\]
\end{corollary}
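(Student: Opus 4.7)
The plan is to combine the double-Witt decomposition of Theorem \ref{thm:multisignature} with the Karoubi--Ranicki decomposition of Theorem \ref{thm:karoubi} and then trace the forgetful morphism component by component. Both target decompositions split as direct sums over involution-invariant primes $\p\subset A$ via the primary decompositions of Proposition \ref{prop:dedekinddecomp2}, and the forgetful map $DW^\eps(A,A\sm\{0\})\to W^\eps(A,A\sm\{0\})$ manifestly respects these primary splittings, so it suffices to fix such a prime $\p$ and check the claim on the $\p$-primary pieces.

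Fix $\p=\overline{\p}$ with uniformiser $p=u_p\overline{p}$. Given a non-singular $\eps$--symmetric linking form $(T,\lambda)$ over $(A,\p^\infty)$, Proposition \ref{prop:nonnatural} provides a (non-natural) decomposition $(T,\lambda)\cong\bigoplus_{l>0}(T_l,\lambda_l)$ with $(T_l,\lambda_l)\in \NN^\eps_{\p,l}$. By the construction in Theorem \ref{thm:MDT2}, the $l$-th summand has double-Witt multisignature $[(\Delta_l(T_l),b_l(\lambda_l))]\in W^{v_p}(A/\p)$, so $\sigma_{\p,l}(T,\lambda)$ is exactly this class.

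Now compute the image of $(T,\lambda)$ in $W^\eps(A,\p^\infty)$ by applying devissage (Proposition \ref{prop:devissage}) termwise. For $l$ even, $(T_l,\lambda_l)$ is Witt-trivial and contributes nothing; for $l=2k-1$ odd, $(T_l,\lambda_l)$ is Witt-equivalent to the non-singular $\eps$--symmetric linking form on $L^\perp/L$ where $L=p^kT_l$, a module annihilated by $p$ and hence representing a class in $W^{u_p\eps}(A/\p)$ via the isomorphism of Proposition \ref{prop:residuefield}. The crux is then to identify this class with $\sigma_{\p,l}(T,\lambda)=[(\Delta_l(T_l),b_l(\lambda_l))]$: once that is established, summing over $l$ odd and over $\p$ and composing with the Karoubi isomorphism $\sigma^W$ yields the stated formula for the forgetful functor.

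The main obstacle is exactly this identification. Concretely, since $T_l$ is free over $A/p^l$, multiplication by $p^{k-1}$ gives an $(A/\p)$-module isomorphism
\[\Delta_l(T_l)=p^{l-1}T_l/p^lT_l \xrightarrow{\cong} p^{k-1}T_l/p^kT_l=L^\perp/L,\]
and one must check that the linking pairing induced on $L^\perp/L$ by $\lambda_l$, transported through this isomorphism and through the assignment $f\mapsto(x\mapsto f(x)/p)$ of Proposition \ref{prop:residuefield}, agrees with the auxiliary pairing $b_l(\lambda_l)([x],[y])=p^{l-1}\lambda_l(x,y)$. This is a direct computation from the definitions, in which the factor $\overline{p}^{k-1}p^{k-1}=u_p^{k-1}p^{l-1}$ exactly reconciles the $\eps$--symmetry of $\lambda_l$ with the $v_p=u_p\eps$--symmetry appearing on the auxiliary form at odd $l$, so no further correction is needed.
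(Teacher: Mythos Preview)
Your argument is correct and supplies precisely the computation the paper leaves implicit: the corollary is stated without proof, being the immediate consequence of tracing the forgetful map through the isomorphisms of Theorems \ref{thm:multisignature} and \ref{thm:karoubi} using devissage (Proposition \ref{prop:devissage}) and the residue-field identification (Proposition \ref{prop:residuefield}). One small correction in your final line: from $p=u_p\overline{p}$ one gets $\overline{p}^{k-1}p^{k-1}=u_p^{-(k-1)}p^{l-1}$ rather than $u_p^{k-1}p^{l-1}$, but this is exactly the unit by which the standard trick at the end of the proof of Theorem \ref{thm:MDT2} converts the $u_p^l\eps$--symmetric auxiliary form $b_l$ into a $u_p\eps$--symmetric one, so the two classes in $W^{u_p\eps}(A/\p)$ do coincide and your conclusion stands.
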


\subsection*{Examples}

We now offer some examples of rings and localisations to which we can apply Theorem \ref{thm:multisignature} and Corollary \ref{cor:forget}.

\begin{example}The only double Witt group calculation prior to this work was performed by Kawauchi-Kojima \cite{MR594531}. By defining a complete system of invariants (which we do not detail here!) they are able to compute the monoid $\NN^+(\Z,\Z\sm\{0\})$ and hence the corresponding double Witt group (which they refer to as the `Witt group of linkings' \cite[Proposition 5.2]{MR594531}). Our results do not apply at the prime 2, but at odd primes we can recover their decomposition using Theorem \ref{thm:multisignature} to compute that \begin{gather*}{DW_p^l(\Z)\cong\left\{\begin{array}{lll}
\Z/2\Z\oplus\Z/2\Z&\,&p\equiv 1\,\,\text{(mod 4)},\\
\Z/4\Z&\,&p\equiv 3\,\,\text{(mod 4)},\end{array}\right.}\\
DW(\Z,(p)^\infty)\cong\bigoplus_{l=1}^\infty DW_p^l(\Z),\quad\text{for $p$ an odd prime.}\end{gather*}Now set $(A,S)=(\Z[\frac{1}{2}],A\sm\{0\})$. By Corollary \ref{cor:forget} we have $(T,\lambda)\in\ker(DW(A,S)\to W(A,S))$ if and only if for each prime $p\equiv 1\,\,\text{mod $4$}$ we have \[\sum_{l\,\,\text{odd}}\sigma_{(p),l}=(0,0)\in\Z/2\Z\oplus \Z/2\Z,\]and for each prime $p\equiv 3\,\,\text{mod $4$}$ we have \[\sum_{l\,\,\text{odd}}\sigma_{(p),l}=0\in\Z/4\Z.\]
\end{example}

We now move onto the example of Laurent polynomial rings over fields. All the terminology and results below concerning (single) Witt groups are well-known and borrowed from Ranicki \cite[39C]{MR1713074}. Only the statements concerning double Witt groups in our examples are original.

Suppose $\FF=\R$ or $\C$ where $\R$ has the trivial involution and $\C$ has the involution given by complex conjugation. The \emph{Laurent polynomial ring} $A=\FF[z,z^{-1}]$ has the involution extended linearly from $\FF$ by $\overline{z}=z^{-1}$. Define the set of involution invariant units in a commutative Noetherian ring $R$ to be \[U(R)=\{a\in R\,|\,\overline{a}a=1\}.\]Write the set of \emph{irreducible monic polynomials over $\FF$} as $\mathcal{M}(\FF)$ and define a subset as\[\{p(z)\in \mathcal{M}(\FF)\,|\,\overline{(p)}=(p)\}=: \overline{\mathcal{M}}(\FF)\subseteq\mathcal{M}(\FF).\]In other words, an irreducible monic polynomial $p(z)$ is in $\overline{\mathcal{M}}(\FF)$ if and only if there exists $u_p\in U(\FF[z,z^{-1}])$ such that $u_p\overline{p}=p$. It is well-known that the prime ideals of $\FF[z,z^{-1}]$ are in 1:1 correspondence with the elements of $\mathcal{M}(\FF)$. Define multiplicative subsets \begin{gather}P:=\{p(z)\in\FF[z,z^{-1}]\,|\,p(1)\neq 0\},\\ \tag*{\text{and}} Q:=\FF[z,z^{-1}]\sm \{0\}=P\cup (z-1)^\infty.\end{gather}Then $Q^{-1}\FF[z,z,^{-1}]=\FF(z)$, the fraction field.

Now consider Example \ref{lem:andrewripoff} where $p=(z-a)$ for $a\overline{a}=1$ and note that if we had taken an arbitrary $\eps$--symmetric linking form $(T,\lambda)$ over $(A,p^\infty)$, the $l$th auxiliary form $(\Delta_l(T),b_l(\lambda))$ over the field $A/pA$ is $v_p$-symmetric for \[v_p:=\left\{\begin{array}{lll}\eps&&\text{$l$ even,}\\u_p\eps&&\text{$l$ odd.}\end{array}\right.\]The isomorphism \[f\co A/pA\xrightarrow{\cong}\FF;\qquad z\mapsto a,\]affects $v_p$ in the following way\[f\co \left\{\begin{array}{lll}\eps&&\text{}\\u_p\eps&&\text{}\end{array}\right.\mapsto \qquad\left\{\begin{array}{lll}f(\eps)&&\text{$l$ even,}\\-a^2f(\eps)&&\text{$l$ odd.}\end{array}\right.\]But now by our standard trick from the end of the proof of Theorem \ref{thm:MDT2} we can modify the $-a^2f(\eps)$--symmetry to $-f(\eps)$--symmetry. As single Witt groups are blind to auxiliary forms when $l$ is even, one consequence is the isomorphism:\[W^\eps(\FF[z,z^{-1}],(a-z)^\infty)\cong W^{-f(\eps)}(\FF).\]

\begin{example}\label{ex:C}Note that for any algebraically closed $\FF$, we have a 1:1 correspondence of sets \[U(\FF)\to\overline{\mathcal{M}}(\FF);\qquad a\mapsto (z-a).\]

If $\FF=\C$ then recall (Ranicki \cite[39.22]{MR1713074}) that for $\eps=\pm1$ there is an isomorphism\[W^\eps(\C)\xrightarrow{\cong}\Z ,\]given by the signature of the hermitian pairing in the case $\eps=1$ and in the case $\eps=-1$ we send the skew-hermitian pairing $\theta(x,y)$ to the hermitian pairing $\theta(x,iy)$ and take the signature of this. By the discussion above and Example \ref{lem:andrewripoff} we have \[\begin{array}{rcl}W^\eps(\C[z,z^{-1}],Q)&\xrightarrow{\cong}&\bigoplus_{a\in S^1}\Z,\\
&&\\
DW^\eps(\C[z,z^{-1}],Q)&\xrightarrow{\cong}&\bigoplus_{a\in S^1}\bigoplus_{l=1}^\infty\Z.\end{array}\]and $(T,\lambda)$ is in the kernel of the forgetful map if and only if for each $a\in S^1$ we have\[\sum_{l\,\,\text{odd}}\sigma_{a,l}(T,\lambda)=0\in\Z.\]
\end{example}

\begin{example}\label{ex:R}If $\FF=\R$ then (by Ranicki \cite[39.23]{MR1713074}) we have\[\overline{\mathcal{M}}(\R)=\{(z-1)\}\cup\{(z+1)\}\cup\{p_\theta(z)\,|\,0<\theta<\pi\},\]where \[p_\theta(z)=(z- e^{i\theta})(z-e^{-i\theta}).\]The corresponding residue class fields are given by the rings with involution\[\begin{array}{rclr}\R[z,z^{-1}]/(z\pm 1)&\xrightarrow{\cong}&\R;&z\mapsto \mp1,\\
\R[z,z^{-1}]/(p_\theta(z))&\xrightarrow{\cong}&\C;&z\mapsto e^{i\theta}.\end{array}\]Hence we have\[\begin{array}{rcl}W^\eps(\R[z,z^{-1}],Q)&\cong&\left\{\begin{array}{lll}\bigoplus_{0<\theta<\pi}\Z&&\eps=1,\\
\Z\oplus\Z\oplus\bigoplus_{0<\theta<\pi}\Z&&\eps=-1.\end{array}\right.\\
&&\\
DW^\eps(\R[z,z^{-1}],Q)&\cong&\left\{\begin{array}{lll}\bigoplus_{0<\theta<\pi}\bigoplus_{l=1}^\infty\Z&&\eps=1,\\
(\bigoplus_{l=1}^\infty\Z)\oplus(\bigoplus_{l=1}^\infty\Z)\oplus(\bigoplus_{0<\theta<\pi}\bigoplus_{l=1}^\infty\Z)&&\eps=-1.\end{array}\right.\end{array}\]Again, $(T,\lambda)$ is in the kernel of the forgetful map if and only if \[\begin{array}{rcll}\eps=1\co &&&\sum_{l\,\,\text{odd}}\sigma_{\theta,l}(T,\lambda)=0\in\Z \qquad\text{for each $0<\theta<\pi$},\\
&&&\\
\eps=-1\co &&&\sum_{l\,\,\text{odd}}\sigma_{\pm1,l}(T,\lambda)=0\in\Z,\\
&&\text{and}& \sum_{l\,\,\text{odd}}\sigma_{\theta,l}(T,\lambda)=0\in\Z\qquad\text{for each $0<\theta<\pi$}.\end{array}
\]
\end{example}

\begin{example}\label{ex:Q}The Witt group for $\FF=\Q$ is also well-known. We refer the reader to Ranicki \cite[39.24]{MR1713074} for details of this group. In particular (and using the terminology of \cite[39.24]{MR1713074}), for each $(z\pm 1)\neq p(z)\in \overline{\mathcal{M}}(\Q)$ there is defined a natural number $t_{p(z)}$, and for each of $\eps=\pm1$ there are integer-valued signature invariants $\sigma^i_{p}$ for $i=1,\dots, t_{p(z)}$. Hence a non-singular $(\pm1)$-symmetric linking form $(T,\lambda)$ over $(\Q[z,z^{-1}],Q)$ determines an element of the kernel of the forgetful map only if for each $(z\pm 1)\neq p(z)\in \overline{\mathcal{M}}(\Q)$ and each $i=1,\dots,t_{p(z)}$\[\sum_{l\,\,\text{odd}}\sigma^i_{p,l}=0.\]
\end{example}

If we restrict to the multiplicative subset $P$ in Example \ref{ex:R}, we recover precisely the obstructions obtained by Levine \cite[1.7]{MR1004605} to a knot being \emph{doubly-slice}. Levine's obstructions arise from the $\R$-coefficient Blanchfield form of the knot (see Section \ref{sec:knots} for definition). Restricting to the multiplicative subset $P$ in Examples \ref{ex:C} and \ref{ex:Q} we obtain doubly-slice obstructions similarly. As far as we can tell, our obstructions over the coefficient rings $\C$ and $\Q$ are new, although we have not included them as a theorem in Section \ref{sec:knots} as they are very similar to those of Levine. According to Levine \cite{MR0246314}, and applying Theorem \ref{thm:covering}, we see that each of these obstructions is in fact realised by an odd-dimensional knot in every odd dimension.

\section{Seifert and Blanchfield forms}\label{sec:knots}

\begin{definition}An \emph{$\eps$--symmetric Seifert form} $(K,\psi)$ over $R$ is a f.g.\ projective $R$--module $K$ and a morphism of $R$--modules $\psi\co K\to K^*$ such that $\psi+\eps\psi^*$ is an isomorphism (note that this makes $(K,\psi+\eps\psi^*)$ a non-singular $\eps$--symmetric form). For a Seifert form $(K,\psi)$ we define an endomorphism $e=(\psi+\eps\psi^*)^{-1}\psi$  and we note that this gives $1-e=\eps(\psi+\eps\psi^*)^{-1}\psi^*$). A morphism of Seifert forms $g\co (K,\psi)\to(K',\psi')$ is a morphism of $R$--modules $g\co K\to K'$ such that $g^*\psi'g=\psi$, it is an isomorphism if $g$ is an $R$--module isomorphism.
\end{definition}

The action of $e$ on the underlying $R$--module $K$ of a Seifert form $(K,\psi)$ makes $K$ an $R[s]$--module where $s$ is a formal variable with action $s(x):=e(x)$ for $x\in K$. $R[s]$ is a ring with involution where we extend the involution from $R$ by $\overline{s}=1-s$. When we wish to remember the morphism by which $s$ acts, we will write $(K,e)$. The \emph{Seifert dual} of an $R[s]$--module $(K,e)$ is the $R[s]$--module $(K,e)^*=(K^*:=\Hom_R(K,R),1-e^*)$. An $R[s]$--submodule of $K$ is an $R$--submodule $j\co L\hookrightarrow K$ such that $ej(L)\subset j(L)$, then $L$ inherits an $R[s]$--module structure in the obvious way.

\begin{definition}\label{def:DWseifert}A \emph{(split) lagrangian} for a $\eps$--symmetric Seifert form $(K,\psi)$ over $R$ is a $R[s]$--submodule $j \co  L \hookrightarrow K$ such that the sequence in the category of $R[s]$--modules and Seifert duals\[0\to (L,e)\xrightarrow{j}(K,e)\xrightarrow{j^*(\psi_\eps+\psi^*)} (L,e)^*\to0\]is (split) exact, and $j^*\psi j=0$. If $(K,\psi)$ admits a (split) lagrangian it is called \emph{(split) metabolic}. If $(K,\psi)$ admits two lagrangians $j_\pm\co L_\pm\hookrightarrow K$ such that they are complementary as $R[s]$--submodules\[\left(\begin{matrix}j_+\\j_-\end{matrix}\right)\co L_+\oplus L_-\xrightarrow{\cong} K,\]then the form is called \emph{hyperbolic}. Although $R$ itself is not assumed to have a half-unit, the automorphism $s$ behaves like a half-unit and we may hence define the \emph{$\eps$--symmetric Witt and double Witt group of Seifert forms over $R$} respectively by\[\widehat{W}_\eps(R)\qquad\text{and}\qquad \widehat{DW}_\eps(R).\]
\end{definition}

The types of linking forms that arise in classical knot theory, called Blanchfield forms, are non-singular $\eps$--symmetric linking forms over $(R[z,z^{-1}],P)$ where $P$ is the set of \emph{Alexander polynomials} \[P:=\left\{p(z)\in R[z,z^{-1}]\,|\,p(1)\in R\text{ is a unit}\right\}.\]

The ring $R[z,z^{-1}]$ does not contain a half-unit necessarily. However, we will now make a well-known observation that will allow us to define double Witt groups $DW^\eps(R[z,z^{-1}], P)$ in the normal way. It is possible to formally adjoin the half-unit $(1-z)^{-1}$ to $R[z,z^{-1}]$ without affecting the category $\H(R[z,z^{-1}],P)$ or the corresponding categories of linking forms. Indeed, according to Ranicki \cite[10.21(iv)]{MR1713074}, there is an equivalence of exact categories\begin{equation}\label{eq:excision}\H(R[z,z^{-1}],P)\cong \H(R[z,z^{-1},(1-z)^{-1}],P).\end{equation}Under this equivalence, an object $T$ of $\H(R[z,z^{-1}],P)$ corresponds to a homological dimension 1, f.g.\ $R[z,z^{-1}]$--module $T$ such that $1-z\co T\to T$ is an isomorphism. 

\begin{proposition}\label{prop:cartmorph}The category of non-singular $\eps$--symmetric linking forms is unchanged under the equivalence of Equation \ref{eq:excision}. Under the equivalence of Equation \ref{eq:excision}, (split) lagrangians correspond to (split) lagrangians.

For each $n>1$, the categories of $P$--acyclic $n$--dimensional $\eps$--symmetric (Poincar\'{e}) complexes and of $P$--acyclic $(n+1)$--dimensional $\eps$--symmetric (Poincar\'{e}) pairs are unchanged under the equivalence of Equation \ref{eq:excision}.
\end{proposition}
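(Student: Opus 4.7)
The strategy rests on the flat localisation $\iota\co A\to A'$ where $A=R[z,z^{-1}]$ and $A'=R[z,z^{-1},(1-z)^{-1}]$. The key preliminary observation is that for any $T$ in $\H(A,P)$ the element $(1-z)$ acts invertibly on $T$: if $pT=0$ for some $p\in P$, then writing $p(z)=p(1)+(z-1)q(z)$ with $p(1)\in R^\times$ shows that $(z-1)$ is a unit in $A/(p)$ and hence acts invertibly on $T$. So $T$ carries a canonical $A'$-module structure and the functor underlying the equivalence of Equation \ref{eq:excision} is the identity on underlying modules.

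For part 1 I would verify that torsion duality is preserved. By Lemma \ref{lem:ext2}, $T^\wedge\cong\Ext^1_A(T,A)$, and flatness of $A'/A$ gives $\Ext^1_{A'}(T,A')\cong A'\otimes_A\Ext^1_A(T,A)$. But $\Ext^1_A(T,A)$ is itself a f.g.\ $P$-torsion $A$-module on which $(1-z)$ therefore acts invertibly, so this base change is the identity. The resulting canonical isomorphism $T^\wedge_A\cong T^\wedge_{A'}$ is natural, is compatible with the evaluation isomorphism $T\cong T^{\wedge\wedge}$ of Lemma \ref{lem:ext1}, and commutes with the $\eps$-symmetric involution $\lambda\mapsto\eps\lambda^\wedge$. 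Non-singularity is then a condition on an isomorphism whose domain, codomain, and meaning are unchanged, and morphisms of linking forms pass through by functoriality of base change.

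For part 2, a (split) lagrangian of $(T,\lambda)$ is a submodule $j\co L\hookrightarrow T$ in $\H$ such that the sequence $0\to L\to T\to L^\wedge\to 0$ is (split) exact. Since the equivalence is the identity on underlying modules, specifying a submodule of $T$ over $A$ is the same data as specifying a submodule over $A'$, and by part 1 the morphism $j^\wedge\lambda$ and its target are unchanged. Exactness and splitness are conditions on the underlying $A$-module sequence and so transport directly.

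Part 3 is where the main difficulty lies. Base change $-\otimes_A A'$ gives an exact functor from f.g.\ projective $A$-complexes to f.g.\ projective $A'$-complexes, carrying $\eps$-symmetric structures $\{\phi_s\}$ along by functoriality and preserving both $P$-acyclicity and the Poincar\'{e} duality condition (duality is cap product with $\phi_0$, whose mapping cone is changed only by flat base change). The hard part is the inverse direction: an arbitrary $P$-acyclic symmetric Poincar\'{e} $A'$-complex need not be pulled back module-by-module from an $A$-complex. My plan is to exploit the hypothesis $n>1$ together with algebraic surgery below the middle dimension (Ranicki \cite{MR560997}) to reduce to complexes whose homology is concentrated in $\H(A',P)$, and then to use part 1 to lift this homology back to $\H(A,P)$, reassembling an $A$-resolution to produce the required Poincar\'{e} $A$-complex with its symmetric structure. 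The same reduction handles the $(n+1)$-dimensional relative case of pairs. At this point the statement coincides with Ranicki's own assertion in \cite[10.21]{MR1713074} and I would appeal there for the remaining technical bookkeeping.
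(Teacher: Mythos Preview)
Your arguments for the first two parts are correct and in fact more explicit than the paper's own proof, which simply observes that Equation~\ref{eq:excision} is an instance of a Cartesian morphism of localisations and cites Ranicki \cite[3.1.3, 3.6.2, 3.2.1]{MR620795} (and \cite[\S4]{MR2058802}) for all three assertions at once. Your direct verification that $(1-z)$ acts invertibly on every $P$--torsion module, together with the flat base-change identification of $\Ext^1$, is the concrete content underlying that citation at the level of linking forms, and it is a cleaner route for parts~1 and~2 than appealing to the general machinery.

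For part~3, however, your outlined strategy has a genuine gap. Algebraic surgery below the middle dimension produces a \emph{cobordant} symmetric Poincar\'e complex, not a homotopy equivalent one; so reducing an $A'$--complex by surgery to one whose homology is concentrated in $\H(A',P)$ and then lifting that homology back to $\H(A,P)$ would at best recover the correct cobordism class over $A$, not the original object up to equivalence. The proposition asserts an equivalence of \emph{categories} of $P$--acyclic symmetric (Poincar\'e) complexes and pairs, and surgery does not respect that. What is actually needed is precisely the Cartesian morphism framework of \cite{MR620795}: the Cartesian square for $A\to A'$ and $A\to P^{-1}A$ lets one lift $P$--acyclic projective $A'$--complexes (with their symmetric structures) to $A$--complexes directly, without passing through homology or surgery. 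Your closing appeal to Ranicki is therefore doing all the work in part~3, and the surgery step should be dropped; at that point your proof and the paper's coincide.
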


\begin{proof}The equivalence of Equation \ref{eq:excision} comes from a special case of a general Cartesian morphism of localisations of rings with involution (see Ranicki \cite[p.\ 201]{MR620795}). The proof of Proposition \ref{prop:cartmorph} for general Cartesian morphisms can be found in Ranicki \cite[3.1.3, 3.6.2, 3.2.1]{MR620795}. See also Ranicki \cite[\textsection 4]{MR2058802}.
\end{proof}

Proposition \ref{prop:cartmorph} justifies the definition:

\[DW^\eps(R[z,z^{-1}],P):=DW^\eps(R[z,z^{-1},(1-z)^{-1}],P).\]

\subsection{Seifert and Blanchfield forms of a $(2k+1)$--knot}

An \emph{$n$--knot}, or \emph{knot} unless $n$ is to be specified, is an ambient isotopy class of oriented, locally flat topological embeddings $K\co S^n\hookrightarrow S^{n+2}$ (where all spheres are considered to have a preferred orientation already). In a standard abuse of notation we will also use the word knot to mean a particular $K$ in an ambient isotopy class and the image of $K$ in $S^{n+2}$. Any embedding $K\co S^{n}\hookrightarrow S^{n+2}$ has trivial normal bundle (2--plane bundles over spheres are trivial for $n>1$, and when $n=1$ consider that the normal bundle is oriented) and hence, by choosing a framing, we may excise a small, trivial tubular neighbourhood of the knot from $S^{n+2}$. Thus, the \emph{knot exterior} is the manifold with boundary \[(X_K,\partial X_K):=(\closure (S^{n+2}\sm (K(S^n)\times D^2)),S^n\times S^1)\]which has a preferred orientation coming from the ambient $S^{n+2}$. The knot exterior $X_K$ is homotopy equivalent to the \emph{knot complement} $S^{n+2}\sm K$ and hence has the homology of a circle $H_*(X_K)= H_*(S^1)$ by Alexander duality. The abelianisation $\pi_1(X_K)\to H_1(X_K)\cong\Z$ thus determines an \emph{infinite cyclic cover $\overline{X_K}$} with group of deck transformations $\Z$ generated by $z$, making $H_*(\overline{X_K})$ a finitely generated $\Z[z,z^{-1}]$-module. $H_*(\overline{X_K})$ is moreover torsion with respect to the set of Alexander polynomials $P$ (see Levine \cite[Corollary 1.3]{MR0461518}).

If there is a locally flat embedding of a manifold with boundary $(F^{n+1},S^n)\hookrightarrow S^{n+2}$ then we say the embedded $F$ is a \emph{Seifert surface} for the boundary knot. It was shown by many authors independently that every knot $K$ admits a Seifert surface $F^{n+1}$ (e.g.\ Kervaire \cite{MR0189052}, Zeeman \cite{MR0160218}). For $n\neq 2$, the unknot is characterised as the only knot which admits $D^{n+1}$ as a Seifert surface.

A knot has two very tractable homological invariants, called the Blanchfield and Seifert forms of the knot. For the sake of clarity and motivation for the subsequent algebra, we will write down the well-known relationships between the Blanchfield and Seifert forms. We will then use this description of the Seifert and Blanchfield forms to prove that the double Witt group of Seifert forms is isomorphic to the double Witt group of Blanchfield forms.

We turn first to the Blanchfield form of an $n$--knot $K$, the original version of which was defined by Blanchfield \cite{MR0085512}. In fact our description is based on that in Levine \cite{MR0461518}. Set $A=\Z[z,z^{-1}]$ and suppose $T$ is an $A$--module and that $\Hom_A(T,A)=0$. Set $t(T)$ to be the \emph{$\Z$--torsion} \[t(T)=\ker(T\to \Q[z,z^{-1}]\otimes_A T)\qquad \text{and} \qquad f(T)=T/t(T).\]Note that $f(T)$ may still have torsion with respect to the multiplicative subset $\Z[z,z^{-1}]\sm\{0\}$. Levine \cite{MR0461518} shows that for any module $T$ in $\H(A,P)$ the $\Z$--torsion and $\Z$--torsion free components are picked out as follows\[\begin{array}{rcl}\Ext_A^2(T,A))&\cong& t(T),\\
\Ext_A^1(T,A))&\cong& f(T).\\
\end{array}\]Now suppose $C$ is a bounded $P$--acyclic chain complex over $A$ and that there is a chain homotopy equivalence $\phi:C^{n-*}\simeq C$. Then by the universal coefficient spectral sequence collapse detailed in Levine \cite{MR0461518} we obtain an isomorphism\[\begin{array}{rcl}f(H^r(C;A))&\xrightarrow{\cong}& H^r(C;A)/\Ext^2_A(H_{r-2}(C;A),A)\\&\xrightarrow{\cong}&\Ext_A^1(H_{r-1}(C;A,A))\\ &\xrightarrow{\cong}& \Ext_A^1(H^{(n+1)-r}(C;A,A))\cong \Hom_A(f(H^{(n+1)-r}(C;A)),P^{-1}A/A).\end{array}\]When $C=C_*(\overline{X_K})$ is the singular chain complex of the universal cover of the knot exterior and $[X_K]\cap-=\phi:C^{n-*}\xrightarrow{\simeq} C$ is Poincar\'{e} duality, the above isomorphism is adjoint to the following pairing:

\begin{definition}\label{def:levblanch}The \emph{Blanchfield form} for a $(2k+1)$-knot $K$ is the non-singular $(-1)^{k}$--symmetric linking form over $(\Z[z,z^{-1}],P)$ defined by the pairing \[Bl\co f(H^{k+2}(\overline{X_K}))\times f(H^{k+2}(\overline{X_K}))\to P^{-1}A/A;\qquad (x,y)\mapsto p^{-1}\overline{\tilde{y}(\phi(x))},\]where $x,y\in C^{k+2}$, $\tilde{y}\in C^{k+1}$ and $p\in P$ such that $d^*\tilde{y}=py$.
\end{definition}

\begin{remark}Our definition is actually Poincar\'{e} dual to the usual definition of a Blanchfield form as defined by Levine \cite{MR0461518}.
\end{remark}

We look now at the Seifert form for an $n$--knot $K$. Suppose we have made a choice of Seifert surface $j\co F\hookrightarrow S^{n+2}$ for the knot $K$. The cohomology linking form on $H^{k+2}(S^{n+2})$ is clearly trivial but the underlying pairing given by linking is non-trivial on the chain level \[l\co C^{k+2}(S^{n+2})\times C^{k+2}(S^{n+2})\to \Z;\qquad (x,y)\mapsto x(a),\]where $da=y\cap[S^{n+2}]\in C_{k+1}$.

Choose a chain homotopy inverse $D\co C_{*}(S^{n+2})\to C^{n+2-*}(S^{n+2})$ to the chain-level cap-product $-\cap[S^{n+2}]$. If $P$ is a f.g.\ $\Z$--module, denote the torsion-free component by:\[f(P):=P/TP.\]

\begin{definition}\label{def:seifert}The \emph{Seifert form of $(F,K)$} is the (well-defined) $(-1)^{k+1}$--symmetric Seifert form $(f(H^{k+1}(F)),\psi)$ over $\Z$ given by\[\psi\co f(H^{k+1}(F))\times f(H^{k+1}(F))\to \Z;\qquad([u],[v])\mapsto l(x,y),\]where $u,v\in C^{k+1}(F)$ and $x=D(j_*(u\cap[F]))$, $\partial y=D(j_*(v\cap[F]))$. It has the property that $(f(H^{k+1}(F)),\psi+(-1)^{k+1}\psi^*)$ is the non-singular, $(-1)^{k+1}$-symmetric middle-dimensional cohomology intersection pairing of $F$.
\end{definition}

Fixing an orientation of an embedded normal bundle to $F$, denote $i^+,i^-\co F\to S^{n+2}\sm F$ small displacements in the positive and negative normal directions. A Meier--Vietoris argument shows that the reduced homology morphism \[i^+_*-i^-_*\co \widetilde{H}_r(F)\to \widetilde{H}_r(S^{n+2}\sm F)\] is an isomorphism for all $r$. Farber \cite[1.1]{MR718824}, then shows how to interpret the morphism \[e:=(\psi+(-1)^k\psi^*)^{-1}\psi\co  H^{k+1}(F)\to H^{k+1}(F)\] as the Poincar\'{e} dual of the morphism \[(i^+_*-i^-_*)^{-1}i^+_*\co H_{k+1}(F)\to H_{k+1}(F).\]

\begin{remark}The Seifert form is often defined in terms of these displacements $i^\pm$. The geometric definition of linking requires that the chains involved are disjoint, and the use of $i^\pm$ is one way to ensure this. Hence setting $a=j_*(u\cap[F])$ and $b=j_*(v\cap[F])$ in Definition \ref{def:seifert}, we have\[\psi([u],[v])=\text{link}(i^+_*a,b)=\text{link}(a,i^-_*b)=\text{link}(i^+_*a,i^-_*b).\]
\end{remark}

Levine \cite[pp.\ 43]{MR0461518} has made the connection between the Seifert and Blanchfield forms for a knot clear and we now quote these results for the reader's convenience. Suppose we are given a Seifert surface $F$ for $K$ and we remove an open normal neighbourhood of $F$ to perform a cut-and-paste construction of the infinite cyclic cover of $X_K$. There is a Meier--Vietoris sequence and it is shown by Levine \cite[p.\ 43]{MR0461518} that this breaks into short exact sequences in reduced homology\begin{equation}\label{eq:chnhtpy}0\to \widetilde{H}_r(F)[z,z^{-1}]\xrightarrow{(i^+_*z-i^-_*)}\widetilde{H}_r(S^{n+2}\sm F;\Z)[z,z^{-1}]\to \widetilde{H}_r(\overline{X_K})\to 0,\end{equation}the $\Z$-torsion free components of which give a short exact sequence\[0\to f(\widetilde{H}_r(F))[z,z^{-1}]\xrightarrow{(i^+_*-i^-_*)^{-1}(i^+_*z-i^-_*)}f(\widetilde{H}_r(F))[z,z^{-1}]\to f(\widetilde{H}_r(\overline{X_K}))\to 0,\]which, when $r=k+1$, is Poincar\'{e} dual to the sequence\[0\to f(H^{k+1}(F))[z,z^{-1}]\xrightarrow{-((1-e)+ez)}f(H^{k+1}(F))[z,z^{-1}]\to \underset{\cong f(H^{k+2}(\overline{X_K}))}{\underbrace{f(H^{k+2}(\overline{X_K},\overline{\partial X_K}))}}\to 0.\]So we can recover the underlying module of the Blanchfield form from the data of the Seifert form. What is more, the Blanchfield pairing on $f(H^{k+2}(\overline{X_K}))$ can also be completely recovered from the Seifert form. We refer the reader to Levine \cite[14.3]{MR0461518} for the geometric argument that this is the case.

\subsection*{Algebraic covering}

It is shown by Ranicki \cite{MR2058802} that method for recovering the Blanchfield form from a choice of Seifert form generalises to an entirely algebraic construction. This construction sends an $\eps$--symmetric Seifert form over $R$ to a non-singular $(-\eps)$--symmetric linking form over $(R[z,z^{-1}],P)$. We wish to use this construction to show that the double Witt group of Seifert forms is isomorphic to the double Witt group of Blanchfield forms, so we recall it now.

\begin{definition}[{Ranicki \cite{MR2058802}}]\label{def:coveringsief}
The \emph{covering} of an $R[s]$--module $K$ is the object of $\H(R[z,z^{-1}],P)$ given by \[B(K)=\coker((1-s)+sz\co K[z,z^{-1}]\to K[z,z^{-1}]),\]so that the action of $s$ on $B(K)$ is given by $(1-z)^{-1}$. The covering is a functor from f.g.\ $R[s]$--modules to $\H(R[z,z^{-1}],P)$, which respects the involutions.

The \emph{covering} of an $\eps$--symmetric Seifert form $(K,\psi)$ over $R$ is the non-singular $(-\eps)$--symmetric Blanchfield form $B(K,\psi)=(T,\lambda)$ over $R$ given by $T=B(K)$ (where the action of $s$ on $K$ is by $e=(\psi+\eps\psi^*)^{-1}\psi$), and\[\begin{array}{rcl}\lambda\co  T\times T&\to& P^{-1}R[z,z^{-1}]/R[z,z^{-1}];\\ ([x],[y])&\mapsto&-(1-z^{-1})(\psi+\eps\psi^*)(x,((1-e)+ez)^{-1}(y))\end{array}\]so that $\lambda$ is resolved by the following chain map\[\xymatrixcolsep{4pc}\xymatrix{
0\ar[r]&K[z,z^{-1}]\ar[rr]^{(1-e)+ez}\ar[d]^{(1-z)(\psi+\eps\psi^*)}&&K[z,z^{-1}]\ar[r]\ar[d]^-{-(1-z^{-1})(\psi+\eps\psi^*)}&T\ar[r]\ar[d]^-{\lambda}&0\\
0\ar[r]&K^*[z,z^{-1}]\ar[rr]^{((1-e)+ez)^*}&&K^*[z,z^{-1}]\ar[r]&T^\wedge\ar[r]&0}\](The $(1-z)$ factor forces $(-\eps)$--symmetry.)
\end{definition}

In \cite[1.8(i)]{MR2058802} and \cite[3.10]{MR2058802}, Ranicki shows that covering defines a surjective morphism from the monoid of isomorphism classes of $\eps$--symmetric Seifert forms over $R$ to monoid of isomorphism classes of $(-\eps)$--symmetric linking forms over $(R[z,z^{-1}],P)$.

\begin{lemma}\label{lagrangians}The covering morphism sends a lagrangian $j\co L\hookrightarrow K$ of a non-singular Seifert form $(K,\psi)$ to a lagrangian of the covering linking form. Hence the covering morphism preserves the property of being metabolic and of being hyperbolic.
\end{lemma}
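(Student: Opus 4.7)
The plan is to apply the covering functor $B$ to the defining short exact sequence of a Seifert lagrangian, and then match the resulting sequence with the lagrangian sequence for the covering Blanchfield form.

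First I would verify that
\[0 \to L \xrightarrow{j} K \xrightarrow{j^*(\psi+\eps\psi^*)} L^* \to 0\]
is a short exact sequence of $R[s]$--modules, where $L^*$ carries the Seifert dual structure $1-e^*$. The submodule condition $eL\subseteq L$ makes $j$ an $R[s]$--morphism. For $j^*(\psi+\eps\psi^*)$, a short computation from $e = (\psi+\eps\psi^*)^{-1}\psi$ and the identity $(\psi+\eps\psi^*)^* = \eps(\psi+\eps\psi^*)$ shows that $(\psi+\eps\psi^*)\circ e = (1-e^*)\circ(\psi+\eps\psi^*)$, so $\psi+\eps\psi^*$, and hence the composite, is an $R[s]$--map.

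Next I would apply $B$ to this sequence. By a snake-lemma argument on the $3\times3$ diagram whose rows are the three copies of $0 \to L[z,z^{-1}] \to K[z,z^{-1}] \to L^*[z,z^{-1}] \to 0$ and whose vertical maps are the respective $(1-s)+sz$'s, exactness of the original sequence together with the injectivity of each $(1-s)+sz$ (which holds because each cokernel lies in $\H(R[z,z^{-1}],P)$) produces a short exact sequence
\[0 \to B(L) \xrightarrow{B(j)} B(K) \to B(L^*) \to 0.\]
Since cokernels commute with direct sums, additivity of $B$ yields that complementary lagrangians $L_+ \oplus L_- \xrightarrow{\cong} K$ descend to an isomorphism $B(L_+)\oplus B(L_-) \xrightarrow{\cong} B(K)$, so the hyperbolic claim will follow immediately once the metabolic claim is established.

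The key step is to identify $B(L^*)$ with $B(L)^\wedge$ naturally, and compatibly with the covering Blanchfield form $\lambda$ on $B(K)$. The natural isomorphism $B(L^*) \cong B(L)^\wedge$ is part of Ranicki's covering formalism in \cite{MR2058802}: the covering functor intertwines Seifert duality and torsion duality. Transporting along this identification converts the above sequence into
\[0 \to B(L) \xrightarrow{B(j)} B(K) \xrightarrow{B(j)^\wedge \lambda} B(L)^\wedge \to 0,\]
which is exactly the defining sequence of a lagrangian for the covering linking form. The main obstacle is verifying that the map $B(K) \to B(L)^\wedge$ produced by the snake lemma genuinely coincides with $B(j)^\wedge \lambda$; one handles this by comparing the chain-level resolution defining $\lambda$ (with its characteristic $(1-z^{-1})$ twist) to the cokernel presentation defining $B(L^*)$, both being built from the common data $\psi+\eps\psi^*$. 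The Seifert lagrangian condition $j^*\psi j = 0$ ensures that $\lambda$ vanishes on $B(L)$, giving $B(L)\subseteq B(L)^\perp$, and the exactness above promotes this to equality. Together with the additivity observation, this proves the full statement.
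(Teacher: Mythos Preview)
Your proposal is correct and follows essentially the same strategy as the paper: apply the covering functor to the Seifert lagrangian sequence and identify the result with the Blanchfield lagrangian sequence. The paper carries this out by writing down an explicit five-row commutative diagram resolving the sequence $0\to B(L,e)\to T\to B(L,e)^\wedge\to 0$, in which the passage from $B(K^*,1-e^*)$ to $B(K,e)^\wedge$ is realised by the chain-level twist $(1-z),\,-(1-z^{-1})$; the crucial observation is then simply that $j^*$ commutes with these twists, so inserting the isomorphism $\xi$ does not disturb exactness. Your ``main obstacle'' --- matching the snake-lemma map with $B(j)^\wedge\lambda$ via the $(1-z^{-1})$ twist --- is exactly this step, and your sketch of how to handle it is accurate; the paper just makes the diagram explicit rather than invoking the duality-intertwining property of $B$ abstractly.
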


\begin{proof}We wish to show that the sequence\[0\to B(L,e)\xrightarrow{B(j)} T\xrightarrow{B(j)^\wedge\circ\lambda} B(L,e)^\wedge\to 0\] is exact. This sequence is resolved by the commutative diagram\[\xymatrix{
0\ar[r]&L[z,z^{-1}]\ar[rrr]^{(1-e)+ez}\ar[d]^{j}&&&L[z,z^{-1}]\ar[r]\ar[d]^-{j}&B(L,e)\ar[r]\ar[d]^{B(j)}&0\\
0\ar[r]&K[z,z^{-1}]\ar[rrr]^{(1-e)+ez}\ar[d]^{\psi+\eps\psi^*}&&&K[z,z^{-1}]\ar[r]\ar[d]^-{\psi+\eps\psi^*}&B(K,e)\ar[r]\ar[d]_\cong^{B(\psi+\eps\psi^*)}&0\\
0\ar[r]&K^*[z,z^{-1}]\ar[d]^{(1-z)}\ar[rrr]^{e^*+(1-e^*)z}&&&K^*[z,z^{-1}]\ar[d]^{-(1-z^{-1})}\ar[r]&B(K^*,1-e^*)\ar[r]\ar[d]_\cong^\xi&0\\
0\ar[r]&K^*[z,z^{-1}]\ar[d]^{j^*}\ar[rrr]^{((1-e)+ez)^*}&&&K^*[z,z^{-1}]\ar[d]^{j^*}\ar[r]&B(K,e)^\wedge\ar[r]\ar[d]^{B(j)^\wedge=B(j^*)}&0\\
0\ar[r]&L^*[z,z^{-1}]\ar[rrr]^{((1-e)+ez)^*}&&&L^*[z,z^{-1}]\ar[r]&B(L,e)^\wedge\ar[r]&0}\]where $\xi$ is defined to make the diagram commute. As the sequence of $R$--modules \[0\to L\xrightarrow{j} K\xrightarrow{j^*\circ(\psi+\eps\psi^*)} L^*\to 0\]is exact, we need only show that $\ker(B(j^*))=\ker(B(j^*)\circ\xi)$ so that inserting the isomorphism $\xi$ preserves exactness of the sequence\[0\to B(L,e)\xrightarrow{B(j)} B(K,e) \xrightarrow{B(j^*)\circ B(\psi+\eps\psi^*)} B(L,e)\to 0.\]But $j^*$ commutes with the maps $(1-z)$ and $-(1-z^{-1})$, so the result follows.
\end{proof}

\begin{theorem}\label{thm:covering}The covering induces an isomorphism of double Witt groups\[B\co \widehat{DW}_\eps(R)\xrightarrow{\cong} DW^{-\eps}(R[z,z^{-1}],P).\]
\end{theorem}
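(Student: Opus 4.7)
The proof must establish three properties of the induced map $B$ on double Witt groups: well-definedness as a group homomorphism, surjectivity, and injectivity. The first two are essentially immediate. Covering is additive with respect to direct sums, and by Lemma \ref{lagrangians} sends hyperbolic Seifert forms to hyperbolic Blanchfield forms, so $B$ descends to a homomorphism of the monoid quotients defining the double Witt groups. Surjectivity at the monoid level is due to Ranicki \cite[1.8(i)]{MR2058802}, and hence also holds at the level of the double Witt quotients.

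The content lies in injectivity. I would reduce this to the following core claim: \emph{if the Blanchfield form $B(K,\psi)$ is hyperbolic, then $[K,\psi]=0$ in $\widehat{DW}_\eps(R)$}. Granting the claim, injectivity follows by a standard monoid-quotient argument: if $[K,\psi]\in\ker B$, then $B(K,\psi)\oplus H_1\cong H_2$ for hyperbolic Blanchfield forms $H_i$; applying the core claim to any Seifert preimage of $H_i$ supplied by surjectivity produces hyperbolic Seifert lifts $H'_i$ with $B(H'_i)\cong H_i$; then $B((K,\psi)\oplus H'_1)\cong H_2$ is hyperbolic, and a final application of the claim yields $[(K,\psi)\oplus H'_1]=0$, hence $[K,\psi]=0$. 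To prove the core claim itself, given complementary lagrangians $M_\pm\subset B(K)$ of the Blanchfield form, I would construct $R[s]$-submodule lagrangians $L_\pm\subset K$ of the Seifert form by pulling $M_\pm$ back through the defining short exact sequence
\[0\to K[z,z^{-1}]\xrightarrow{(1-e)+ez}K[z,z^{-1}]\to B(K)\to 0,\]
exploiting the compatibility of $s$ acting as $e$ on $K$ and as $(1-z)^{-1}$ on $B(K)$ to ensure that the lifts are closed under the $s$-action. The Seifert lagrangian conditions (that $\psi+\eps\psi^*$ identifies $L_\pm$ with $(L_\mp)^*$ and that $\psi$ itself vanishes on each $L_\pm$) together with the complementarity $K=L_+\oplus L_-$ would then be established by running the diagram chase of Lemma \ref{lagrangians} in reverse, using the explicit formula for $\lambda$ in Definition \ref{def:coveringsief}.

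The main obstacle will be this lifting step. Specifically, one must ensure that the candidate submodules $L_\pm$ are $R[s]$-submodules and not merely $R$-submodules, that they are projective over $R$ so that the Seifert-form structure is genuine, and above all that they actually sum to $K$ rather than merely becoming complementary after stabilisation by a further hyperbolic summand. The complementarity is especially delicate, since the preimages of $M_\pm$ in $K[z,z^{-1}]$ are only defined modulo the image of $(1-e)+ez$, and different choices of representative may alter the sum and intersection with $K$; selecting compatible representatives for the two lagrangians simultaneously is what forces the reverse lagrangian conditions to hold.
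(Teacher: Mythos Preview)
Your reduction and overall structure are sound, but your approach to the core claim---lifting Blanchfield lagrangians $M_\pm \subset B(K)$ back to Seifert lagrangians $L_\pm \subset K$---is substantially harder than what the paper does, and you have correctly identified the obstacles without resolving them. The paper avoids this lifting problem entirely.

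The paper's key move is to reduce not to ``$B(K,\psi)$ hyperbolic'' but all the way down to the hypothesis ``$B(K) = 0$ as a module'' (this further reduction leans on Ranicki's description of the fibres of the covering map at the monoid level, \cite[1.8(i), 3.10]{MR2058802}). Under that hypothesis $e(1-e)$ is nilpotent, i.e.\ $e$ is a \emph{near-projection}, and there is a canonical splitting $K = K_+ \oplus K_-$ via the idempotent $p_e = (e^k + (1-e)^k)^{-1}e^k$ for $k \gg 0$, with $e_-$ and $1-e_+$ nilpotent. The identity $\psi e = (1-e^*)\psi$ then forces $\psi$ to interchange the two summands, so $\psi$ is off-diagonal in this decomposition and $(K,\psi)$ is hyperbolic on the nose---no stabilisation needed.

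The point is that the complementary lagrangians $K_\pm$ are manufactured \emph{intrinsically} from the near-projection $e$, not by pulling anything back from $B(K)$ (which is zero). This sidesteps exactly the projectivity, $R[s]$-closure, and complementarity problems you flag. Your lifting strategy, by contrast, must work for arbitrary hyperbolic $B(K,\psi)$ with $B(K) \neq 0$, where there is no near-projection structure available; I do not see how to force complementarity of the lifts in that generality, and the paper's argument suggests that the right thing to do is not to try.
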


\begin{proof}Lemma \ref{lagrangians} shows that covering morphism is a well-defined homomorphism on the level of double Witt groups. We have already noted that it is surjective. So it will be sufficient to prove that if the covering of a Seifert form vanishes then the Seifert form is hyperbolic.

In general, if the covering $B(K)$ of a f.g.\ projective $R$--module $K$ with an $R[s]$--module structure given by $e$ vanishes, then $e(1-e)\co K\to K$ is nilpotent (see Ranicki \cite[2.3]{MR2058802}). This is called being a \emph{near-projection}. This is equivalent to the condition that there is a decomposition of f.g.\ projective $R$--modules with $R[s]$--module structure:\begin{equation}\label{eq:direct}(K,e)\cong (K_+,e_+)\oplus (K_-,e_-),\end{equation} with both $e_-$ and $1-e_+$ nilpotent. It is shown moreover in \cite[1.8(i)]{MR2058802} that there exists $k>0$ such that, defining a projection \[p_e:=(e^k+(1-e)^k)^{-1}e^k\co K\to K,\] the direct sum decomposition of equation \ref{eq:direct} may be taken to be $K_+=\im(p_e)$, $K_-=\im(1-p_e)$. We will improve this to a decomposition of Seifert forms.

Note that the $R[s]$--module $(K,e)$ being a near-projection is equivalent to any one of the $R[s]$--modules $(K,1-e)$, $(K^*,e^*)$, $(K^*,1-e^*)$ being a near projection. Hence we may similarly decompose the Seifert dual module\[(K,e)^*=(K^*,1-e^*)=(K^*,1-e^*)_+\oplus (K^*,1-e^*)_-\]using the projection $p_{1-e^*}\co K^*\to K^*$. But as \[p_e^*= ((e^*)^k+(1-e^*)^k)^{-1}(e^*)^k=p_{e^*}=1-p_{1-e^*}\] and $(\im(p_e))^*=\im(p_e^*)$ we can identify the summands\[\begin{array}{rcl}(K^*,1-e^*)_+&=&((K_-)^*,(1-e^*)|_{(K_-)^*})\\ (K^*,1-e^*)_-&=&((K_+)^*,(1-e^*)|_{(K_+)^*})\end{array}\]Now if $(K,\psi)$ is a Seifert form such that $B(K,e)=0$, then as $\psi e=(1-e^*)\psi$ we have an isomorphism of Seifert forms \[\lmat p\\ 1-p\rmat\co \left((K,e),\psi\right)\xrightarrow{\cong}\left((K_+\oplus K_-,e_+\oplus e_-),\lmat 0&\psi|_{K_-}\\ \psi|_{K_+}&0\rmat\right)\] so that $(K,\psi)$ is hyperbolic.
\end{proof}

\begin{example}\label{ex:Cseif}If $R$ is a field with involution then $P$ is coprime to the multiplicative subset $S=(1-z)^\infty$ and such that $SP=R[z,z^{-1}]\sm\{0\}$, the full multiplicative subset. By Theorem \ref{thm:multisignature} we have that\[DW^\eps(R[z,z^{-1}],R[z,z^{-1}]\sm\{0\})\cong DW^\eps(R[z,z^{-1}],P)\oplus DW^\eps(R[z,z^{-1}],(1-z)^\infty).\]Hence, for instance, using Examples \ref{ex:C} and \ref{ex:R} we have \[DW^\eps(\C[z,z^{-1}],P)\cong\bigoplus_{a\in S^1\sm\{1\}}\bigoplus_{l=1}^\infty\Z\]and\[DW^\eps(\R[z,z^{-1}],P)\cong\left\{\begin{array}{lll}\bigoplus_{0<\theta<\pi}\bigoplus_{l=1}^\infty\Z&&\eps=1,\\
(\bigoplus_{l=1}^\infty\Z)\oplus(\bigoplus_{0<\theta<\pi}\bigoplus_{l=1}^\infty\Z)&&\eps=-1.\end{array}\right.\]
\end{example}

\bibliographystyle{plain}
\def\MR#1{}
\bibliography{writeup}

\end{document}